\patchcmd{\epigraph}{\@epitext{#1}}{\itshape\@epitext{#1}}{}{}
\definecolor{mygray}{gray}{.7}
\newcommand{\nats}                  {{\mathbb N}}
\newcommand{\BMt} {\mathbb{B}}
\newcommand{\CMt} {\mathbb{C}}
\newcommand{\Mt} {\mathbb{M}}
 \newcommand{\mc}{\mathsf{mc}}
 \newcommand{\scc}{\mathsf{sc}}
  \newcommand{\nf}{\mathsf{nf}}
\newcommand{\conn}{{\copyright}}
\newcommand{\conw}{\mathsf{conn}}
\theoremstyle{definition}
\newtheorem{definition}{\vspace{1mm}Definition}[section]
\theoremstyle{plain}
\newtheorem{theorem}[definition]{\vspace{1mm}Theorem}
\newtheorem{proposition}[definition]{\vspace{1mm}Proposition}
\newtheorem{example}[definition]{\vspace{1mm}Example}
\newcommand{\der}    						  {\vartriangleright}
\newcommand{\tuple}[1]                         {{\langle #1\rangle}}
\DeclareMathOperator*{\var}{\mathsf{var}}
\newcommand{\ou}          {\vee}
\newcommand{\e}          {\wedge}
\def\mxctsym#1{\hbox{\hbox to 0pt{$#1\land$}$#1\lor$}}
\newcommand*{\mxct}{\mathrel{\mathchoice{\mxctsym\displaystyle}
  {\mxctsym\textstyle}
  {\mxctsym\scriptstyle}
  {\mxctsym\scriptscriptstyle}}}
\newcommand{\pl}   {{\e\!\!\!\!\ou}}
\newcommand{\pll}          {\mxct}
\DeclareMathOperator*{\npl}{\mathsf{n}_{\pll}}
\newcommand{\ignore}[1]{}
 \date{ }
\title{
{
An  unexpected 
Boolean connective
}
}
\author{
S\'ergio Marcelino\\
{\tt 
smarcel@math.tecnico.ulisboa.pt} \\
{SQIG - Instituto de Telecomunica\c c\~oes}\\
{Dep. Matem\'atica - Instituto Superior T\'ecnico}\\
{Universidade de Lisboa, Portugal}}
\begin{document}

\maketitle

%
\begin{abstract}
We consider a 2-valued non-deterministic connective $\pl$ defined by the 
 table resulting from the entry-wise union of the tables of conjunction and disjunction.
 Being half conjunction and half disjunction we named it \emph{platypus}. 
 The value of $\pl$ is not completely determined by the input, contrasting with usual notion of Boolean connective.
We call non-deterministic Boolean connective any connective based on multi-functions over the Boolean set.
In this way, non-determinism  allows for an extended notion of
truth-functional connective.
%
%
Unexpectedly, this very simple connective and the logic it defines, 
illustrate various 
key advantages in working with generalized notions of semantics (by incorporating non-determinism), calculi (by allowing multiple-conclusion rules) and even of logic (moving from Tarskian to Scottian consequence relations).
%
We show that the associated logic 
cannot be characterized by any finite set of finite matrices, whereas with non-determinism two values suffice.
Furthermore, this logic is not finitely axiomatizable using single-conclusion rules, however 
%
we provide a very simple analytical multiple-conclusion axiomatization using only two rules.
Finally, 
deciding the associated multiple-conclusion logic is $\mathbf{coNP}$-complete, but deciding its single-conclusion fragment 
is in $\mathbf{P}$.

\end{abstract} 



\epigraph{ 
{So they cut him to pieces, wrote a thesis\\
A cranium of deceit, he's prone to lie and cheat;\\
 It's no wonder -- a blunder from down under\\
 Duckbill, watermole, duckmole!}}
 {\textup{Mr. Bungle}, Platypus~~~~~~~~~~~}

\section{Introduction}


This paper works as an overview of a series of concepts, results and techniques 
that have been 
yielding new insights in the analysis (and synthesis) of logics
%
%
in recent years. 
The power of these methods is illustrated by establishing various uncommon properties of a very simple non-deterministic  Boolean connective that we name \emph{platypus}. 
There are two crucial ingredients to be explored. 

On one hand, we depart from the traditional approach in logic of using (deterministic) semantics based on logical matrices, as proposed long ago by {\L}ukasiewicz and followers, 
and adopt a generalization of the standard logic matrix semantics proposed in the beginning of this century by  Avron and his collaborators
\cite{avlev05}, which allows the interpretation of the connectives to be based in multi-functions instead of simply functions.
The central idea is that in a non-deterministic matrix (Nmatrix) a connective can non-deterministically pick from a set of possible values instead of its value being completely determined by the input values.
This allows us to mix conjunction ($\land$) and disjunction ($\lor$) into a single connective ($\pl$)
such that
 for each input it
 may choose from the values 
output by conjunction and disjunction with that same input, enlarging the Boolean world beyond truth-functionality. 
This is reminiscent of the \emph{
platypus} 
whose appearance mixes bird and mammal traits and has generously lent its name to $\pl$.
Introducing the possibility of non-determinism has very powerful consequences. 
The most immediate advantage is that 
Nmatrices can finitely characterize logics that are not characterizable by (deterministic) matrices \cite{Avron,finval}.
As we shall see, this is also true for the logic of $\pl$.
The extra expressivity offered by non-determinism has also proven extremely valuable in obtaining recent compositional results in logic.
Namely, in
producing simple modular semantics for combined logics \cite{wollic}.
In particular yielding  finiteness-preserving semantics of strengthenings of a given logic with a set of axioms \cite{CaMaAXS}, covering a myriad of examples in the literature and explaining the emergency of structures like twist-structures \cite{
Oditwist,Umtwist} or swap-like structures \cite{coniglioswap}.
Further, non-determinism can also be used to give simple infectious semantics to a range of syntax-based relevant companions of a given many-valued logic \cite{ISMVL}.
%
%
 
 On the other hand, we consider 
 the 
 symmetrical multiple-premises{/}multiple-conclusions notion of logic introduced by Scott \cite{Scott}, and also Shoesmith and Smiley \cite{ShoesmithSmiley}, in the 1970s.  
This bilateralist view \cite{CCALJM}, generalizes the asymmetrical multiple-premises{/}single-conclusion approach of most modern logic, introduced as a mathematical object by Tarski and his followers. 
The gain in symmetry of expressive-power supports the effective development of analytic calculi for logics that could not even be finitely axiomatized before
 \cite{ShoesmithSmiley,synt,wollic19}.
The internalization of case analysis in the derivation mechanism,
yields nice proof-theoretical properties impossible in the single-conclusion setting.
These advantages have been mostly neglected in logic itself, but have been well-appreciated, for instance, in providing constructive proofs in algebra \cite{RINALDI2018226}. 
A key aspect of multiple-conclusion consequence is that it can be used to study the single-conclusion fragments of logics.
The logic of $\pl$ also allows us to illustrate such advantages.
Although it 
is not single-conclusion finitely axiomatizable, 
using the results in \cite{synt}
we were not only able to axiomatize it in an automated way, but also guarantee that the obtained axiomatization 
is analytic, a crucial property from the proof-theoretical point of view.
In \cite{synt} we also have shown how to get purely symbolic decision and proof-search procedures from analytical axiomatizations.
%
%
 From the compositionality point of view there are clear advantages in considering such calculi, involving no additional meta-language as in the (less pure) usual alternatives: sequent calculi, labeled tableaux or natural deduction. 
 This internal view of logic is also directly associated with the fundamental notion of logic as a consequence operation. 
 Notably, merging calculi for given logics 
 precisely captures the mechanism for combining logics known as fibring \cite{GabFib99,acs:css:ccal:98a}, yielding the least logic on the joint language that extends them.
 Such perspective allows us to better isolate, study and tame the origins of interactions in combined logics.
We detail the structure of the paper highlighting the most relevant results in each section.
In Section~\ref{sec:birth} we introduce the platypus connective:
first as a multi-function and show that together with any basis for the clone of all Boolean functions it forms a basis for the clone of all Boolean multi-functions (Prop.~\ref{decomp});
 then, as a non-deterministic logical connective characterized by a finite Nmatrix and show that this would not be possible using a finite matrix (Thm.~\ref{nofinmat}).
In Section~\ref{sec:ax} we explore the divide between single- and multiple-conclusion settings in terms of axiomatizability.
We show that platypus' logic is not finitely axiomatizable by a finite set of single-conclusion rules (Thm.~\ref{notfinax}) and provide an analytical axiomatization using two multiple-conclusion rules 
(Thm.~\ref{finax}).
In Section~\ref{sec:comp} we show that deciding the single-conclusion fragment of platypus' logic is in {\bf P} whilst seen as a multiple-conclusion logic it is {\bf coNP}-complete  (Thm.\ref{complexity}).
We wrap up in with Section~\ref{sec:final}, where we summarize the obtained results and open some doors for future work.


\section{The birth of platypus}\label{sec:birth}

The study of classical propositional logic is deeply connected to the study of Boolean functions, that is, maps $f:\mathbf{2}^n\to \mathbf{2}$ with $\mathbf{2}=\{0,1\}$.
The pioneering work on the clones of Boolean functions  by Post \cite{Post41} provides a complete analysis of the semantical expressivity of every fragment of classical logic. 
What happens if we consider functions that may output multiple values?

%
%
%

\subsection{From functions to multi-functions}

Multi-functions associate to each given input possibly more than one value.
An $n$-ary Boolean multi-function\footnote{In this paper exclude the possibility of a multi-function outputting the empty set however there are situations where this option is desirable
as we mention in the end of Section~\ref{sec:ax}.} is a map $f:\mathbf{2}^n\to \wp(\mathbf{2})\setminus \emptyset=\{\{0\},\{1\},\{0,1\}\}$. 
Of course, if for every input $\vec{x}\in \mathbf{2}^n$ we have that $f(\vec{x})$ is a singleton, then $f$ is also a (Boolean) function.
Observe that there
are $2^{2^n}$ $n$-ary Boolean functions and
 $3^{2^n}$ $n$-ary Boolean multi-functions. Hence, there are $81$ binary Boolean multi-functions, $16$ of them are Boolean functions, we will give particular 
 attention to one of the $65$ that are not functions.
%
The main actor in this paper, \emph{platypus}, can be seen as
a binary Boolean multi-function given by $\pl(x,y)=\{x\land y,x\lor y\}$.
Multi-functions can be easily represented as tables just like functions, check $\pl$ in tabular form in Example~\ref{ex:semantics}.
%
%
The multi-function $\pl$ was considered in \cite{Avron} in order to approximate the behaviour of a faulty AND gate, which responds correctly if the inputs are similar, and unpredictably otherwise. In this same sense $\pl$ can also be seen as a faulty OR gate.
\smallskip

A \emph{clone} 
 is a set of 
 functions, over some fixed set, closed by composition and containing every projection function ($\pi^n_i(x_1,\ldots,x_n)=x_i$ for every $n\in \nats$ and $1\leq i\leq n$). 
A set of 
functions \emph{generates} (or is a \emph{basis} for) a clone if this clone is exactly the smallest that contains that given set. 
This concept can be generalized to multi-functions once we fix a notion of composition.
 %
A natural possibility would be to see multi-functions as particular cases of relations.
In that case, we have that, when composing given $n$-ary $f$ and 
 $g_i$ for $1\leq i\leq n$, 
 for each input, the \emph{possible values} output by the $g_i$'s are accumulated through the composition, 
yielding that
%
$$
f\circ(g_1,\ldots,g_n)(\vec{x}_1,\ldots,\vec{x}_n)=f(g_1(\vec{x}_1),\ldots,g_n(\vec{x}_n))=
\bigcup\{f(\vec{y}):\vec{y}\in\!\! \prod\limits_{1\leq i\leq  n}g_i(\vec{x}_i)\}
$$


 As an alternative we may require that two terms representing the same formula 
 must have the same value when being fed to the outer function and obtain
$$f(g_1(\vec{x}_1),\ldots,g_n(\vec{x}_n))=
\bigcup\{f(\vec{y}):\vec{y}\in\!\! \prod\limits_{1\leq i\leq  n}g_i(\vec{x}_i),y_i=y_j \text{ if }g_i(\vec{x}_i)=g_j(\vec{x}_j)\}$$

Note that whenever $g_i$ for $1\leq i\leq n$ are functions both notions coincide. However, 
 if $g(0)=g(1)=\{0,1\}$, $f(0,0)=f(1,1)=\{0\}$ and $f(0,1)=f(1,0)=\{1\}$ then the composition yields
$f(g(x),g(x))=\{0,1\}$ with the first (more liberal) option and simply 
 $f(g(x),g(x))=\{0\}$ with the second. 
%
%
%
%

The first interesting property of platypus is that,
 together with any 
basis for the clone of all Boolean functions, it  forms a basis for the clone of all Boolean multi-functions.
Moreover, this is the case regardless of the notion of composition between the two we mentioned above.

\begin{proposition}\label{decomp}
 Given any Boolean multi-function $f$, there are Boolean functions $g_0$ and $g_1$ such that $f=\pl(g_0,g_1)$.
\end{proposition}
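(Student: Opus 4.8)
The plan is to exploit the fact that, although $\pl$ is non-deterministic, each of its three possible output values---$\{0\}$, $\{1\}$ and $\{0,1\}$---is already realized by a suitable \emph{deterministic} pair of arguments. Reading off the table, $\pl(a,b)=\{a\}$ whenever $a=b$, whereas $\pl(a,b)=\{0,1\}$ whenever $a\neq b$. Since the entire range $\{\{0\},\{1\},\{0,1\}\}$ of any Boolean multi-function thus coincides with the range of $\pl$ on $\mathbf{2}^2$, it suffices to decompose $f$ pointwise.

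Concretely, I would define $g_0,g_1\colon\mathbf{2}^n\to\mathbf{2}$ by $g_0(\vec{x})=\min f(\vec{x})$ and $g_1(\vec{x})=\max f(\vec{x})$, taking $\min$ and $\max$ with respect to the order $0<1$. Because each of $g_0(\vec{x})$ and $g_1(\vec{x})$ is a single Boolean value, both $g_0$ and $g_1$ are genuine Boolean functions. The verification that $\pl(g_0,g_1)=f$ is then a three-way case analysis on $f(\vec{x})$: if $f(\vec{x})=\{0\}$ then $g_0(\vec{x})=g_1(\vec{x})=0$ and $\pl(0,0)=\{0\}$; if $f(\vec{x})=\{1\}$ then $g_0(\vec{x})=g_1(\vec{x})=1$ and $\pl(1,1)=\{1\}$; and if $f(\vec{x})=\{0,1\}$ then $g_0(\vec{x})=0\neq 1=g_1(\vec{x})$, so $\pl(0,1)=\{0,1\}$. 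In every case the output agrees with $f(\vec{x})$.

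Finally, to secure the accompanying claim that the decomposition holds \emph{regardless of the notion of composition}, I would simply appeal to the observation already recorded in the text: when the inner maps are functions, the two candidate compositions coincide. As $g_0$ and $g_1$ are functions by construction, no ambiguity about composition can arise. I expect no genuine obstacle here; the single point worth highlighting is that the non-determinism of $\pl$ is entirely ``self-contained'' within its $2\times 2$ table, so reconstructing an arbitrary multi-function reduces to selecting the right deterministic inputs at each point of the domain.
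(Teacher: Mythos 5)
Your proof is correct and is essentially identical to the paper's: your $g_0(\vec{x})=\min f(\vec{x})$ and $g_1(\vec{x})=\max f(\vec{x})$ are exactly the functions the paper defines by cases (sending the non-deterministic entries $\{0,1\}$ to $0$ and $1$ respectively, and copying $f$ elsewhere), and both arguments close with the same observation that determinism of $g_0,g_1$ makes the two notions of composition agree. No gaps; the min/max phrasing is just a compact restatement of the paper's case split.
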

\begin{proof}
For each input $\vec{x}\in \mathbf{2}^n$ we have that either $f(\vec{x})=\{0\}$, $f(\vec{x})=\{1\}$ or $f(\vec{x})=\{0,1\}$.
Easily, the result follows by letting for $i=0,1$
 $$g_i(\vec{x})=
\begin{cases}
 \{i\} &\text{ if }f(\vec{x})=\{0,1\}\\
 f(\vec{x}) &\text{otherwise}\\
\end{cases}
$$
As $g_0$ and $g_1$ are functions the result of the composition is the same with both notions of composition we considered above.
\end{proof}

This decomposition is not unique and the following equalities hold:
 $$\pl(x,y)=\pl(y,x)=\pl(\pi_1^2(x,y),\pi_2^2(x,y))=\pl(x\land y,x\lor y)=\{x\land y,x\lor y\}=\{x,y\}.$$ 
%
%
Furthermore, if we consider the presentation of $\pl$ given by the last equality, and extend $\pl$ to act over a finite set $I$ as $x\pl_I y=\{x,y\}$ and we
add it to a basis for the clone of all functions over $I$, we obtain a basis for the clone of all multi-functions over $I$.
 That is, we can extend Proposition~\ref{decomp} to  
 the statement that for every multi-function $f$ over a fixed set $I$ of size $n$, we have that there are functions over $I$, $g_1,\ldots g_{n}$ such that
      $f=\pl(g_1, \pl(g_2, \pl(\ldots ,\pl(g_{n-1},g_n))))$. 
 One might argue that a good
alternative name\footnote{I thank Carlos Caleiro for suggesting \emph{platypus} and thus steering me away from  using such boring alternatives.} for $\pl_I$ 
could be (non-deterministic) \emph{union} or \emph{choice} (over $I$).

%
%
%
%

\subsection{Platypus as a non-deterministic  Boolean matrix}


For the sake of readability, we start by quickly revisiting some of the basic concepts needed, for more details see \cite{ShoesmithSmiley,Woj,Humb,font}.
A propositional signature 
is an indexed family $\Sigma=\{\Sigma^{(k)}:k\in \nats\}$ where $\Sigma^{(k)}$ are the k-ary connectives.
Given a set $X$, we denote by $L_\Sigma(X)$ the set of formulas  written with connectives in $\Sigma$ from the elements of $X$.
We consider fixed a (denumerable) set of propositional variables $P$. The propositional language associated to $\Sigma$ is 
$L_\Sigma(P)$. Along the paper,
whilst considering languages over signatures named $\Sigma_x$ 
we shall denote
$L_{\Sigma_x}
(P)$ simply by $L_x$.
Fixed a propositional language $L$, a (Scottian) \emph{logic} is a 
$\der\subseteq \wp(L)\times \wp(L)$ satisfying:
\begin{itemize}
\item[(\bf{O})] $\Gamma\der \Delta$ if $\Gamma\cap \Delta\neq \emptyset$ (\emph{overlap})
\item[(\bf{D})] $\Gamma\cup \Gamma' \der \Delta\cup \Delta'$ if $\Gamma\der \Delta$ 
(\emph{dilution})
\item[(\bf{C})] $\Gamma\der \Delta$ if $\Gamma\cup\Omega\der \overline{\Omega}\cup\Delta'$ for every partition $\tuple{\Omega,\overline{\Omega}}$ of $L$ 
(\emph{cut})
\item[(\bf{S})] $\Gamma^\sigma\der \Delta^\sigma$ for any substitution $\sigma:P\to L$ if $\Gamma\der \Delta$ (\emph{substitution invariance})
\end{itemize}

We say that $\der$ is \emph{finitary} whenever  $\Gamma\der \Delta$ implies  $\Gamma'\der \Delta'$ for 
 finite $\Gamma'\subseteq \Gamma$ and $\Delta'\subseteq \Delta$.
It is well known that this is a generalization of the notion of Tarskian logic. Indeed, given a Scottian logic $\der$, its single conclusion fragment
$\vdash_\der{=}\der{\cap} (\wp (L) \times L)$ is a Tarskian consequence relation satisfying:
\begin{itemize}
\item[(\bf{R})] 
$\Gamma\vdash \varphi$ if $\varphi\in \Gamma$ (\emph{reflexivity}), 
\item[(\bf{M})] 
$\Gamma\cup \Gamma' \vdash \varphi$ if $\Gamma\vdash \varphi$ 
(\emph{monotonicity}),
\item[(\bf{T})] 
$\Gamma\vdash \varphi$ if $\Delta\vdash \varphi $ and $\Gamma\vdash \psi$ for every $\psi\in \Delta$  
(
\emph{transitivity}) 
\item[(\bf{S})]
$\Gamma^\sigma\vdash \varphi^\sigma$ for any substitution $\sigma:P\to L$ if $\Gamma\vdash \varphi$ (\emph{substitution invariance})
\end{itemize}
Furthermore, $\vdash$ is \emph{finitary} whenever $\Gamma\vdash \varphi$ then $\Gamma'\vdash \varphi$
for some
finite $\Gamma'\subseteq \Gamma$.

We say that $\der$ is a multiple-conclusion companion of $\vdash_\der$.
Each Tarskian consequence relation $\vdash$ has potentially infinite multiple-conclusion companions.
The smallest among these, denoted by $\der_\vdash$, is defined as $\Gamma\der_\vdash \Delta$ if and only if 
there is $\varphi\in \Delta$ such that $\Gamma\vdash \varphi$. 
For example, $\vdash$ may be finitary but have non finitary companions, 
 but surely $\der_\vdash$ is finitary.
In an abstract sense any companion of $\vdash$ can be used to study $\vdash$, however there are advantages in considering companions that are particularly well behaved.
As we shall see, although  $\der_\vdash$ is a faithful representation of $\vdash$ in the multiple-conclusion setting, there are nicer ones that can be considered.
\smallskip
%
%

On an orthogonal direction, we will work also 
with a generalized notion of logical matrix.
Along with multi-functions comes an 
ingenious 
 extension of the standard notion 
 of matrix semantics introduced in \cite{avlev05}. 
A $\Sigma$-\emph{Nmatrix} is a tuple $\Mt=\tuple{V,\cdot_\Mt,D}$ where $V$ is the set of \emph{truth-values} and $D\subseteq V$ the set of \emph{designated} values.
Further, 
for each $\conn\in \Sigma^{(n)}$, $\cdot_\Mt$
yields 
$\conn_\Mt:V\to \wp(V)\setminus \emptyset$, \emph{interpreting} $\conn$ as a multi-function over $V$ instead of a function as in the case of matrices.
When complex formulas are interpreted over 
Nmatrices, the value is not completely determined by the values of the subformulas, instead, the multi-functions giving the interpretation of each connective
are read non-deterministically, 
allowing the valuation to choose for each formula 
 a different possible value. 
An $\Mt$-\emph{valuation} $v$ is a function $v:L_\Sigma(P)\to V$ satisfying $v(\conn(\varphi_1,\ldots,\varphi_k))\in \conn_\Mt(v(\varphi_1),\ldots,v(\varphi_k))$ for any $k$-place connective $\conn\in \Sigma$.
Just like 
in the usual matrix semantics, 
the logic characterized by an $\Sigma$-Nmatrix $\Mt$, $\der_\Mt$, is defined by $\Gamma\der_\Mt \Delta$ whenever
$v(\Gamma)\subseteq D$ implies $v(\Delta)\cap D\neq \emptyset$ for every $\Mt$-valuation $v$.  
As intended, the set of formulas in the left is read conjunctively and the one in the right disjunctively.
 We denote simply by 
 $\vdash_\Mt$ (instead of $\vdash_{\der_\Mt}$) the single-conclusion fragment of $\der_\Mt$, corresponding to the Taskian consequence relation defined by $\Mt$.

The usual logical matrix semantics is recovered when one considers
an Nmatrix for which every connective is interepreted as a function. 
Furthermore, Nmatrix semantics preserves various fundamental properties of matrix semantics.
%
%
 Every partial valuation defined over a set closed for subformulas can be extended to a full valuation (\emph{analyticity}).
Furthermore, for finite Nmatrix $\Mt$, $\der_\Mt$ and $\vdash_\Mt$ are \emph{finitary},
and deciding $\der_\Mt$ and $\vdash_\Mt$ is in {\bf coNP} (see \cite{Avron,synt,wollic19}).

\smallskip

\begin{example}\label{ex:semantics}\em
For $\conw\subseteq \{\land,\lor,\pl\}$, let $\Sigma_\conw$ contain exactly the binary connectives in $\conw$,
and $\BMt_\conw=\tuple{{\bf 2},\cdot,\{1\}}$ be the $\Sigma_\conw$-Nmatrix where for each $\conn\in \conw$
its interpretation multi-function 
is described in tabular form as
 \begin{center}
  \begin{tabular}{c | c c c }
$\land$ & $0$ & $1$  \\
\hline
$0$&  $ 0 $ & $ 0 $ \\
$1$ &$0$ & $1$ 
\end{tabular}\quad $\hookrightarrow$
  \begin{tabular}{c | c c c }
$\pl$ & $0$ & $1$  \\
\hline
$0$&  $ 0 $ & $ 0,1 $ \\
$1$ &$0,1$ & $1$ 
\end{tabular}\quad $\hookleftarrow$
  \begin{tabular}{c | c c c }
$\vee$ & $0$ & $1$  \\
\hline
$0$&  $ 0 $ & $ 1 $ \\
$1$ &$1$ & $1$ 
\end{tabular}
\end{center}
%
%
For ease of notation we do not distinguish between the connective and its interpretation, 
and write $\der_\conw$ instead of $\der_{\BMt_\conw}$,
and 
$\BMt_{\pll}$ and $\BMt_{\land\lor\pl}$ instead of $\BMt_{\{\pll\}}$ and $\BMt_{\{\land,\lor,\pll\}}$. \hfill$\triangle$
\end{example}

Whenever
 $\pl\notin \conw$ we have that $\BMt_\conw$ is the well known Boolean matrix characterizing classical logic in the corresponding signature.
For now, let us look closer at platypus' logic $\der_{\pll}{=}\der_{\BMt_{\pll}}$ and its single-conclusion fragment $\vdash_{\pll}{=}\vdash_{\der_{\pll}}{=}\vdash_{\BMt_{\pll}}$.

As the inclusion arrows indicate, the interpretation of $\land$ and $\lor$ are contained in the interpretation of $\pl$. 
Meaning that if we use the tables of $\land$ or $\lor$ to interpret $\pl$ we obtain a strict subset of valuations.
Yielding,
$$\der_{\pll}\subseteq t_{\pll}(\der_\land) \cap t_{\pll}(\der_\lor)$$
where $t_{\pll}$ swaps every occurrence of $\land$ and $\lor$ by $\pl$.
This fact has an interesting immediate consequence: $\der_{\pll}$ inherits the property of being a right-inclusion logic from $\land$, and of being a left-inclusion logic from $\lor$.
We say that a logic\footnote{This notion is usually presented over Tarskian consequence relations. 
In such setting, due to the asymmetry in the very notion of logic, the two notions are not symmetric. 
We will not enter in details here but just state that for \emph{positive} logics $\der$ where $L_\Sigma(P)\not\der \emptyset$ we have that
$\der$ is left-(right-)inclusion logic iff $\vdash_\der$ is. We point to \cite{Bonzio2018_leftinclusion,ISMVL} for more details.
  }
 $\der$ is of \emph{left-inclusion} (\emph{right-inclusion}) whenever $\Gamma\der \Delta$ implies there are $\Gamma'\subseteq \Gamma$ and $\Delta'\subseteq \Delta$ such that  $\var(\Gamma)\subseteq \var(\Delta)$ ($\var(\Gamma)\supseteq \var(\Delta)$)
where $\var(\varphi)$ denotes the set of variables occurring in $\varphi$. 


%

\begin{proposition}\em
If $\Gamma\der_{\pll} \Delta$ then $\var(\Gamma')=\var(\Delta')$ for some 
 %
 $\Gamma'\subseteq \Gamma$, $\Delta'\subseteq \Delta$. 
\end{proposition}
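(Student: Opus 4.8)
The plan is to argue contrapositively: assuming that no nonempty $\Gamma'\subseteq\Gamma$, $\Delta'\subseteq\Delta$ satisfy $\var(\Gamma')=\var(\Delta')$, I will produce a single $\BMt_{\pll}$-valuation witnessing $\Gamma\not\der_{\pll}\Delta$. Since $\BMt_{\pll}$ is finite, $\der_{\pll}$ is finitary, so I first replace $\Gamma,\Delta$ by finite subsets with $\Gamma\der_{\pll}\Delta$ and assume both finite; any subsets I later extract still lie inside the original $\Gamma,\Delta$.

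The engine is a rigid family of valuations. Fixing a strict linear order $\prec$ on the (finitely many relevant) variables and a sign map $a$, I set $v_{\prec,a}(\varphi)=a(\max_\prec\var(\varphi))$, the sign of the $\prec$-largest variable occurring in $\varphi$ (well defined, since every formula contains at least one variable). I must check this is a genuine valuation: for $\varphi\pl\psi$ the $\prec$-largest variable of $\var(\varphi)\cup\var(\psi)$ is the larger of the two maxima, so $v_{\prec,a}(\varphi\pl\psi)$ equals $v_{\prec,a}(\varphi)$ or $v_{\prec,a}(\psi)$; and since $\pl(x,y)=\{x,y\}$ by the identity noted above, either value lies in $\pl(v_{\prec,a}(\varphi),v_{\prec,a}(\psi))$. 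Thus $v_{\prec,a}$ respects $\pl$, and $v_{\prec,a}(f)=1$ exactly when the $\prec$-maximal variable of $f$ has sign $1$.

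To refute $\Gamma\der_{\pll}\Delta$ it then suffices to arrange $\prec$ and $a$ so that the $\prec$-maximal variable of each $\gamma\in\Gamma$ has sign $1$ while that of each $\delta\in\Delta$ has sign $0$. I build $\prec$ from the top by a peeling procedure: repeatedly pick a variable $z$ that occurs in remaining formulas of at most one side, declare it the current largest, give it sign $1$ if it occurs only in remaining $\Gamma$-formulas and $0$ if only in remaining $\Delta$-formulas, and delete every remaining formula containing $z$ (each such formula now has $z$ as its $\prec$-maximal variable and so is assigned the intended value). If the peeling consumes all formulas, the resulting $v_{\prec,a}$ gives $v_{\prec,a}(\Gamma)\subseteq\{1\}$ and $v_{\prec,a}(\Delta)\cap\{1\}=\emptyset$, contradicting $\Gamma\der_{\pll}\Delta$.

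Hence the peeling must get stuck at some stage with nonempty families $\Gamma'\subseteq\Gamma$, $\Delta'\subseteq\Delta$ remaining in which no variable is one-sided (if either side were empty, a variable of the other side would be one-sided and peelable). That means every variable occurring in a remaining $\Gamma'$-formula also occurs in some remaining $\Delta'$-formula and vice versa, i.e.\ $\var(\Gamma')\subseteq\var(\Delta')$ and $\var(\Delta')\subseteq\var(\Gamma')$, so $\var(\Gamma')=\var(\Delta')$, as required. The point worth stressing is that the naive route—simply pairing the left-inclusion witness with the right-inclusion witness of $\der_{\pll}$—fails, because those two witnessing pairs of subsets need not agree; the real content is the global bookkeeping furnished by the peeling, together with the observation that the unique obstruction to finishing it is precisely a block of formulas whose two sides carry exactly the same variables.
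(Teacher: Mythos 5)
Your proof is correct, and it takes a genuinely different route from the paper's. The paper disposes of the proposition in one line: from the observation $\der_{\pll}\subseteq t_{\pll}(\der_\land)\cap t_{\pll}(\der_\lor)$ and the fact that $t_{\pll}$ preserves variables, it reads off right-inclusion (from the $\land$-reading) and left-inclusion (from the $\lor$-reading) and declares the equality immediate. Your argument is instead self-contained and constructive: the max-variable valuations $v_{\prec,a}$ --- legitimate $\BMt_{\pll}$-valuations precisely because $\pl(x,y)=\{x,y\}$ --- together with the peeling procedure, whose only possible obstruction is a nonempty block with $\var(\Gamma')=\var(\Delta')$. Each approach buys something: the paper's is short and ties $\der_{\pll}$ to the inclusion-logics literature it cites; yours produces an explicit countervaluation whenever no equal-variable block exists, proves the stronger version with nonempty witnesses (as literally stated the proposition is vacuous, since $\Gamma'=\Delta'=\emptyset$ always works), and supplies a step that the paper's ``immediately'' glosses over. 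Indeed your closing remark has real content: the two facts the containments deliver --- some $\delta\in\Delta$ with $\var(\delta)\subseteq\var(\Gamma)$, and some $\gamma\in\Gamma$ with $\var(\gamma)\subseteq\var(\Delta)$ --- do not by themselves entail the conclusion. For instance, $\Gamma=\{p,\,r\pl s\}$ and $\Delta=\{p\pl r,\,s\pl t\}$ satisfy both facts, yet no nonempty subsets have equal variable sets (and accordingly $\Gamma\not\der_{\pll}\Delta$, which your peeling detects by completing successfully on this pair). So a complete proof needs exactly the kind of global bookkeeping you provide, or else a strengthened form of the inclusion properties (witnessing pairs that remain $\der_{\pll}$-consequences), which the containments alone do not yield. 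Two trivial tidying points: restrict the peeling to variables occurring in at least one remaining formula, so that every step deletes something and termination is immediate; and invoke the finitary reduction before passing to the contrapositive, since ``replace $\Gamma,\Delta$ by finite subsets with $\Gamma\der_{\pll}\Delta$'' presupposes the derivability you are in the course of refuting.
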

\begin{proof}
It follows immediately from the previous observation and the fact that  $\var(t_{\pll}(\Gamma))=\var(\varphi)$ for every $\varphi\in L_\land\cup L_\lor$.
\end{proof}

\smallskip


 It is well know that every logic $\der$ is characterized by some family of matrices.
 For example, by the family of 
 matrices $\Mt$ satisfying 
 $\der {\subseteq} \der_\Mt$. 
 Or even by of subset of these, the Lindenbaum bundle of $\der$ formed by $\Mt_\Gamma{=}\tuple{L_\Sigma(P),\cdot,\Gamma}$ for
 $\Gamma\subseteq L_\Sigma(P)$ satisfying $\Gamma \not\der \overline{\Gamma}$ with $\overline{\Gamma} {=} L_\Sigma(P)\setminus \Gamma$ 
 (see \cite{ShoesmithSmiley,font,CCALJM}).
 The same is true of course in the single-conclusion context.
 %
 However can $\der_{\pll}$ or $\vdash_{\pll}$ be characterized by a single matrix? 
 How could such semantics look like?

\begin{proposition}\em
There is a (deterministic) $\Sigma_{\pll}$-matrix $\Mt$ such that $\der_{\pll}=\der_\Mt$.
\end{proposition}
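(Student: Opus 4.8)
The plan is to reduce the statement to a purely combinatorial realization problem and then solve it by a determinization that tags each value with the valuation producing it. First I would record that the multiple-conclusion logic of any deterministic matrix $\Mt=\tuple{V,*,D}$ depends on $\Mt$ only through its family of \emph{truth-sets} $\mathcal{S}_\Mt=\{v^{-1}(D):v \text{ an }\Mt\text{-valuation}\}$: indeed $\Gamma\der_\Mt\Delta$ fails exactly when some $S\in\mathcal{S}_\Mt$ satisfies $\Gamma\subseteq S$ and $S\cap\Delta=\emptyset$. The same description applies to $\der_{\pll}$ itself, whose truth-sets form $\mathcal{S}_{\pll}=\{S\subseteq L_{\pll}:\varphi,\psi\in S\Rightarrow\varphi\pl\psi\in S \text{ and } \varphi,\psi\notin S\Rightarrow\varphi\pl\psi\notin S\}$, since these two closure conditions are precisely what it takes for $\mathbbm{1}_S$ to be a $\BMt_{\pll}$-valuation (recall $\pl(x,y)=\{x,y\}$). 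In fact these are exactly the sets $S$ with $S\not\der_{\pll}\overline{S}$, i.e. the indices of the Lindenbaum bundle. Hence it suffices to produce a deterministic $\Sigma_{\pll}$-matrix $\Mt$ with $\mathcal{S}_\Mt=\mathcal{S}_{\pll}$.

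With this reduction in hand, the two inclusions become transparent. For $\mathcal{S}_\Mt\subseteq\mathcal{S}_{\pll}$ (soundness) it is enough --- and, by testing on distinct variables, necessary --- that both $D$ and $V\setminus D$ be closed under $*$, since then every $v^{-1}(D)$ automatically meets the two closure conditions above. For $\mathcal{S}_{\pll}\subseteq\mathcal{S}_\Mt$ (completeness) I must realize every $\BMt_{\pll}$-valuation $v$ as a truth-set of $\Mt$. The main obstacle is that the obvious candidates collapse too much: a commutative interpretation of $\pl$ would force $p\pl q$ and $q\pl p$ to share a value, whereas $p\pl q\not\der_{\pll}q\pl p$; and the product of the Lindenbaum bundle realizes only the closure of $\mathcal{S}_{\pll}$ under arbitrary intersections, which is strictly larger because $\mathcal{S}_{\pll}$ is not intersection-closed. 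By Thm.~\ref{nofinmat} the matrix $\Mt$ must moreover be infinite.

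The fix I would adopt is to keep the interpretation ``syntactic enough'' to avoid any collapse, by tagging each value with a valuation realizing it. Concretely, take $V=(L_{\pll}\times\mathrm{Val})\cup\{\top,\bot\}$, where $\mathrm{Val}$ is the set of all $\BMt_{\pll}$-valuations, set $D=\{(\varphi,v):v(\varphi)=1\}\cup\{\top\}$, and define $*$ by $(\varphi,v)*(\psi,v)=(\varphi\pl\psi,v)$ on same-tag pairs, while routing every other pair to $\top$ if both arguments lie in $D$ and to $\bot$ otherwise (with $\top,\bot$ absorbing, $\top\in D$, $\bot\notin D$). Then $*$ visibly preserves both $D$ and its complement, giving soundness; and for each $v$ the map $h_v\colon\varphi\mapsto(\varphi,v)$ is a well-defined $\Mt$-valuation with $h_v^{-1}(D)=\{\varphi:v(\varphi)=1\}$, realizing every member of $\mathcal{S}_{\pll}$ and giving completeness. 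I expect the only delicate point to be checking that the ``garbage'' clauses for mixed-tag inputs keep $D$ and $V\setminus D$ closed --- which is exactly why $\top$ and $\bot$ are made absorbing and are split according to designation.
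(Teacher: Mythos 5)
Your proof is correct, but it takes a genuinely different route from the paper's. The paper never constructs a matrix: it invokes the criterion of Shoesmith and Smiley \cite{ShoesmithSmiley} (Thm.~15.2) that a multiple-conclusion logic is characterized by a single (possibly infinite) matrix if and only if it permits \emph{cancellation}, and then verifies cancellation for $\der_{\pll}$ by gluing countervaluations: given variable-disjoint families $X_k,Y_k$ with $\BMt_{\pll}$-valuations $v_k$ witnessing $X_k\not\der_{\pll}Y_k$, their union is a partial valuation on the subformula-closed set $\bigcup_k L_{\Sigma_{\pll}}(\var(X_k\cup Y_k))$, which extends to a total valuation by analyticity of Nmatrix semantics. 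You instead reduce the statement to a realization problem for truth-sets (your characterization of $\mathcal{S}_{\pll}$ via closure of $S$ and of its complement under $\pl$ is exactly right, since $\pl(x,y)=\{x,y\}$) and solve it with an explicit deterministic matrix: disjoint copies of the formula algebra tagged by $\BMt_{\pll}$-valuations, plus two values $\top,\bot$ that receive all mixed-tag applications in a designation-respecting way (note $\top$ is not literally absorbing under your routing rule, e.g.\ $\top\ast\bot=\bot$, but the set $\{\top,\bot\}$ is, which is all you need). All steps check out: same-tag pairs compose syntactically, so each $h_v$ is an $\Mt$-valuation with $h_v^{-1}(D)=v^{-1}(1)$, and $D$ and $V\setminus D$ are closed under $\ast$ by construction, giving both inclusions between $\mathcal{S}_\Mt$ and $\mathcal{S}_{\pll}$. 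The trade-off: the paper's argument is shorter and yields the cancellation property of $\der_{\pll}$ as a by-product, but rests on an external structural theorem; yours is self-contained and constructive, exhibits the (necessarily infinite, by Thm.~\ref{nofinmat}) matrix concretely, and makes visible why any such matrix must interpret $\pl$ non-commutatively.
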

\begin{proof}
   From \cite{ShoesmithSmiley}[Thm. 15.2] we know that a logic $\der$ is definable by a single matrix if and only if it permits \emph{cancellation}. That is, 
if 
$\bigcup_k{X_k}\der \bigcup_k{Y_k}$ and 
$\var(X_i\cup Y_i)\cap \var(X_i\cup Y_j)$ for $i\neq j$, then 
there is $i$ such that $X_i\der Y_j$.
It is easy to see that $\der_\BMt$ permits cancellation. Given $\BMt$-valuations $v_i$ such that $v_i(X_i)=\{1\}$ and $v_i(Y_i)=\{0\}$, we consider the partial valuation defined over
$Z=\bigcup_k L_{\Sigma_{\pll}}(\var(X_k\cup Y_k))$ making 
$v(\varphi)=v_k(\varphi)$ for $\varphi\in  L_{\Sigma_{\pll}}(\var(X_k\cup Y_k)$.
As $Z$ is closed under taking subformulas it can be extended to a full $\BMt$-valuation $v$ showing that $\bigcup_k{X_k}\not\der_\BMt \bigcup_k{Y_k}$.
\end{proof}

Therefore, $\der_{\pll}$, and hence also $\vdash_{\pll}$, can be characterized by a single matrix, however, as we will see, it cannot be a finite one.
Given $\varphi\in L_{\pll}$ let $\nf(\varphi)$ be the smallest subformula $\psi$ of $\varphi$ such that $\varphi\in L_{\pll}(\{\psi\})$.
That is, $\nf((p\pl p)\pl p)=\nf(p\pl p)=\nf(p)=p$ and
$\nf((p\pl q)\pl(p\pl q))=\nf(p\pl q)=p\pl q$.
Let also
\begin{align*}
 \pl^{0}(\varphi)&=\varphi\\
 \pl^{n+1}(\varphi)&=(\pl^n (\varphi)) \pl \varphi
\end{align*}
%
%
It is straightforward to see that for every formula $\varphi$, $\varphi\dashv\vdash_{\pll} \nf(\varphi)$.
In particular $\varphi\dashv\vdash_{\pll} p=\nf(\varphi)$ for every $\varphi\in L_{\pll}(\{p\})$. 
Every pair of formulas written in one variable are equivalent, however when two variables are present the situation dramatically changes and allows us to show the following result.

\begin{theorem}\label{nofinmat}\em
There is no finite matrix $\Mt$ such that
 $\vdash_\BMt=\vdash_\Mt$ 
or $\der_\BMt=\der_\Mt$. 
\end{theorem}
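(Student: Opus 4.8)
The plan is to reduce both assertions to a single statement about the single-conclusion fragment, and then to refute that by a local-tabularity (cardinality) argument. First I would observe that it suffices to show that $\vdash_{\pll}$ is not characterized by any finite matrix: indeed, if some finite matrix $\Mt$ satisfied $\der_{\pll}=\der_\Mt$, then passing to single-conclusion fragments would give $\vdash_{\pll}=\vdash_{\der_{\pll}}=\vdash_{\der_\Mt}=\vdash_\Mt$, so the same $\Mt$ would characterize $\vdash_{\pll}$. Hence ruling out a finite matrix for $\vdash_{\pll}$ rules one out for $\der_{\pll}$ as well, and I may concentrate on the single-conclusion case.

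Next I would record the key property of finite \emph{deterministic} matrices, namely local tabularity. If $\Mt=\tuple{V,\cdot_\Mt,D}$ is a matrix with $|V|=k<\infty$, then for formulas in the two variables $\{p,q\}$ the value $w(\varphi)$ assigned by any valuation depends only on the pair $(w(p),w(q))\in V^2$, since $w$ is a homomorphism. Thus each $\varphi\in L_{\pll}(\{p,q\})$ determines a function $\hat\varphi\colon V^2\to\{0,1\}$ with $\hat\varphi(a,b)=1$ iff the unique value of $\varphi$ under $w(p)=a,\,w(q)=b$ lies in $D$, and one checks that $\varphi\vdash_\Mt\psi$ holds iff $\hat\varphi\le\hat\psi$ pointwise. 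Consequently $\varphi\bider_\Mt\psi$ iff $\hat\varphi=\hat\psi$, so there are at most $2^{k^2}$ interderivability classes among two-variable formulas. In short, any logic given by a finite matrix has only finitely many $\bider$-classes in two variables; this is exactly the feature that the $2$-valued Nmatrix $\BMt_{\pll}$ will be shown to lack.

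The heart of the argument is therefore to exhibit infinitely many pairwise non-interderivable two-variable formulas for $\vdash_{\pll}$. I would use the left combs $\gamma_1=p\pl q$ and $\gamma_{n+1}=\gamma_n\pl q$, all lying in $L_{\pll}(\{p,q\})$, whose subformulas are exactly $p,q,\gamma_1,\dots,\gamma_n$. The claim is that $\gamma_m$ and $\gamma_n$ are $\vdash_{\pll}$-incomparable whenever $m\ne n$, and for this I only need two explicit $\BMt_{\pll}$-valuations (legitimate since $\pl(a,b)=\{a,b\}$, and extendable by analyticity). For $m<n$, taking $w(p)=1,\,w(q)=0$ and routing $p$ upward gives $w(\gamma_j)=1$ for $j\le m$, after which choosing the $q$-branch at level $m+1$ forces $w(\gamma_j)=0$ for all $j>m$; this yields $w(\gamma_m)=1$ and $w(\gamma_n)=0$, refuting $\gamma_m\vdash_{\pll}\gamma_n$. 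Symmetrically, $w(p)=0,\,w(q)=1$ routes a $0$ up to level $m$ and then switches to $q$ at level $m+1$, giving $w(\gamma_m)=0$ and $w(\gamma_n)=1$, refuting $\gamma_n\vdash_{\pll}\gamma_m$. Hence the $\gamma_n$ fall into pairwise distinct $\bider_{\pll}$-classes.

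Putting these together, $\vdash_{\pll}$ has infinitely many interderivability classes in the single pair of variables $\{p,q\}$, so by the local-tabularity observation it cannot equal $\vdash_\Mt$ for any finite matrix $\Mt$; by the reduction, no finite matrix can yield $\der_{\pll}$ either. The step I expect to be the main obstacle is the separation of the combs: making fully rigorous that the two prescribed families of nondeterministic choices really are consistent valuations along the shared nested subformulas $\gamma_1,\dots,\gamma_n$ (the ``switch at depth $m+1$'' must respect $\pl(a,b)=\{a,b\}$ at every node, including the forced collapse $0\pl 0=\{0\}$ and $1\pl 1=\{1\}$ above the switch), and confirming that these two valuations defeat both directions of derivability uniformly for every pair $m\ne n$.
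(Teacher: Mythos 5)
Your proposal is correct and takes essentially the same approach as the paper's proof: both reduce the multiple-conclusion claim to the single-conclusion one by passing to fragments, and then show that $\vdash_{\pll}$ has infinitely many $\dashv\vdash$-classes of formulas in the two variables $p,q$, which no finite (deterministic) matrix can admit. The only differences are inessential: you use the combs $\gamma_{n+1}=\gamma_n\pl q$ where the paper uses $\varphi_n=(\pl^n p)\pl q$ (both separated by the same kind of non-deterministic choice at the $\pl(1,0)$/$\pl(0,1)$ entries), and you prove the finite-matrix bound of $2^{k^2}$ equivalence classes directly where the paper cites the local-tabularity result of \cite{finval}.
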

\begin{proof}
 For each natural $n$, let $\varphi_n=(\pl^n p) \pl q$. 
 Any $\BMt$-valuation such
 that $v(\pl^n p)=1$ for every $n\in \nats$, $v(q)=0$, $v((\pl^i p) \pl q)=1$ and $v((\pl^j p) \pl q)=0$ for  $i\neq j\in \nats$, showing that
 $\varphi_i\not\vdash_\BMt \varphi_j$.
Hence, $\dashv\vdash_\BMt$ splits $L_{\Sigma_{\pll}}(\{p,q\})$ in an infinite number of equivalence classes.
Hence,  
$\vdash_\BMt$ is not locally tabular and hence by \cite{finval} the first part of the result stands.
The second part follows immediately as any $\Mt$ such satisfying $\der_\BMt{=}\der_\Mt$ satisfies also $\vdash_\BMt{=}\vdash_\Mt$.
%
\end{proof}




%

It is easy to see that $p \der_{\pll} p\pl p$ and $p \pl p  \der_{\pll}  p$ but $(p\pl p)\pl q \not\der_{\pll}  p\pl q$
as is shown in the previous proposition. Hence, $\der_{\pll}$ is not self-extensional as it lacks substitution by equivalents.


\subsection{Abbreviations over Nmatrices}
As we have been seeing, and will be ever more clear along the remaining of this paper,
$\pl$ behaves in many different ways from its deterministic Boolean cousins. 
In fact, non-determinism opens the door to various unexpected phenomena. 
A notable difference regards considering connectives defined by abbreviation.
As in the deterministic case, given an $\Sigma$-Nmatrix each formula $\varphi\in L_{\Sigma}(P)$ (in $n$ variables),
defines an 
$n$-ary multi-function 
$$\varphi_\Mt(x_1,\ldots,x_n)=\{v(\varphi(p_1,\ldots,p_n)):v \text{ is } \Mt\text{-valuation}, v(p_i)=x_i\text{ for }1\leq i\leq n \}$$
Note that $\varphi_\Mt$ does not correspond to the composition of the 
interpretation of the connectives (as multi-functions) forming $\varphi$ using any of the notions we mentioned in the previous subsection.
Although it is closer to the second, it is still more restrictive. 
Given $f$ defined by $\varphi$ and $g_i$ by $\psi_i$, their 
composition 
is given by $f\circ(g_1,\ldots,g_n)=\varphi(\psi_1,\ldots,\psi_n)_\Mt$.
Whenever composing $f(g_1(h(x)),g_2(h(x)))$ we must guarantee that the values fed to $f$,
must come from values of $g_1(h(x))$ and $g_2(h(x))$ for the same values of $h(x)$.
Nonetheless, Prop.~\ref{decomp} still applies to this notion of composition of (expressible) multi-functions.
%
%
Another particularity of definitions by abbreviation in the context of Nmatrices is that 
whenever two formulas determine the same function (instead of a multi-function) we have that they are logically interchangeable in every context.
However when 
some input may output more than a value this is not necessarily the case.
For example, $\pl$ is given by a symmetric table, and indeed $\pl(x,y)$ and $\pl(y,x)$ define the same multi-function, we have that
$p\pl q\not\der_{\pll} q\pl p$.
This means that when working with Nmatrices we have to be careful about what we expect from 
a connective defined by abbreviation.
Clearly, it may lose any connection with the connectives used to define it,
as the possibility of independent choices offered by the non-determinism may brake relation between them in the resulting logic. 
This is a question that must be taken into acount 
by any approach to the logics 
of Nmatrices using clones of multi-functions. 
\smallskip

\section{Axiomatizability}\label{sec:ax}

Both  Tarskian and Scottian logics are associated with 
very natural notions of axiomatizability according 
to their type. 
A set of sound rules $R \subseteq {\vdash}$
\emph{axiomatizes} (or is a \emph{basis} for) $\vdash$ whenever $\vdash$ is 
the closure of $R$ under 
({\bf R})({\bf M})({\bf T})({\bf S})
 in which case we write $\vdash_R{=}\vdash$.
 Analogously, 
 set \emph{sound} rules $R \subseteq {\der}$ 
\emph{axiomatizes} (or is a \emph{basis} for) 
$\der$ 
whenever $\der$ is 
the closure of $R$ under 
({\bf O})({\bf D})({\bf C})({\bf S}), and in that case we write $\der_R{=}\der$. 
These definitions fare well on the compositional front, as
given two logics of compatible type axiomatized by sets of rules $R_1$ and $R_2$ (of according type)
their \emph{fibring}, the smallest logic (of the same type) in the combined language that contains both,
is axiomatized
 by $R_1\cup R_2$.


Crucially, in both cases the abstract properties defining each type of calculi correspond to the machinery of Hilbert-style calculi where derived consequences  using a set of rules $R$ are exactly the ones that hold in the logic axiomatized by $R$.
In the single-conclusion case, derivations are sequences where the application of a rule produces a new formula,
and in the multiple-conclusion case the proofs take an arboreal shape since the application of the rules produces set of formulas, each corresponding to a child of the node where it was
 applied\footnote{Rules with empty set of conclusion discontinue the branch of the node where it is applied. 
 We have that $\Gamma\der_R \Delta$ whenever there is a $R$-derivation departing from $\Gamma$ where the leaf of each non discontinued branch must be a formula in $\Delta$.}.
The second notion strictly generalizes the first as derivations using only single-conclusion rules coincide in both settings.
If $R$ is a set of single conclusion rules $\der_R=\der_{\vdash_R}$.
%
For a formal definitions and illustrate examples of such derivations we point to \cite{ShoesmithSmiley,synt,wollic19}. 

%

\subsection{
Single-conclusion rules only}

Rautenberg has shown in \cite{Raut} that every fragment of classical logic is finitely axiomatizable (using single-conclusion rules).

\begin{example}\label{ex:axiomatizationssc}\em
Consider the following well known axiomatizations for the logics of the deterministic reducts of $\BMt_{\land\lor\pl}$.
For $\pl \notin \conw$ we have that
$\vdash_{R_\conw}{=}\vdash_{\conw}$
 with 
\begin{align*}
R^{\scc}_{\land}&= \{\frac{p\land q}{p}\ _{r^\land_1}\,,\, \frac{p\land q}{ q}\ _{r^\land_2}\,,\, \frac{p \,,\, q}{p\land q}\ _{r^\land_3}\} \\
    R^{\scc}_\lor&=\{\frac{p}{\;p\ou q\;} \,,\,
    \frac{\;p\ou p\;}{p}  \,,\,
    \frac{\;p\ou q\;}{q\ou p}  \,,\,
    \frac{\;p\ou (q \ou r)\;}{(p \ou q)\ou r}\}\\
    R^{\scc}_{\land\lor}&=R^{\scc}_\land\cup R^{\scc}_\lor \cup \{ \frac{\;p\ou q\;\;\;p\ou r\;}{\;p\ou (q\e r)\;},\frac{\;p\ou (q\e r)\;}{\;p\ou q\;},\frac{\;p\ou (q\e r)\;}{\;p\ou r\;}\}
\end{align*}
 %
 %
 %
%
We have shown in \cite{soco} that the rules mixing $\land$ and $\lor$ are fundamental in the single-conclusion setting to capture the interaction between these connectives,
contrasting with what happens in the multiple-conclusion setting,
as we shall see latter on in this section.
\text{}\hfill$\triangle$
\end{example}

How about $\vdash_{\pll}$? Can Rautenberg's result be extended to Boolean Nmatrices? 
The next theorem shows that the answer is negative, but first a proposition giving an useful recursive characterization of $\vdash_{\pll}$.
%

\begin{proposition}\label{prop:scpl}\em
 $\Gamma\vdash_{\pll} \varphi$ if and only if $\varphi\in L_{\pll}(\nf(\Gamma))$.
\end{proposition}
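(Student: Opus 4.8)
The plan is to prove both implications working throughout with the single‑conclusion semantics: $\Gamma\vdash_{\pll}\varphi$ means that every $\BMt$‑valuation $v$ with $v(\Gamma)\subseteq\{1\}$ satisfies $v(\varphi)=1$. The only feature of $\pl$ I will use is its table, read as the constraint that $v(\alpha\pl\beta)$ is forced to $1$ when $v(\alpha)=v(\beta)=1$, forced to $0$ when $v(\alpha)=v(\beta)=0$, and free to take either value when $v(\alpha)\neq v(\beta)$. I will also use two facts already available: the equivalence $\gamma\bider_{\pll}\nf(\gamma)$, which gives $v(\gamma)=v(\nf(\gamma))$ for every valuation (so satisfying $\Gamma$ is the same as satisfying the set of seeds $\nf(\Gamma)=\{\nf(\gamma):\gamma\in\Gamma\}$), and the analyticity of $\BMt$ (any partial valuation on a subformula‑closed set extends to a total one).

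For the direction $(\Leftarrow)$, assume $\varphi\in L_{\pll}(\nf(\Gamma))$ and take any $v$ with $v(\Gamma)\subseteq\{1\}$. Then $v(\nf(\gamma))=1$ for each $\gamma\in\Gamma$, so all the generators of $L_{\pll}(\nf(\Gamma))$ receive value $1$ under $v$. Since $\pl(1,1)=\{1\}$, a routine induction on the way $\varphi$ is built from these generators by $\pl$ yields $v(\varphi)=1$, whence $\Gamma\vdash_{\pll}\varphi$.

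The substance is the converse, which I will prove in contrapositive form: if $\varphi\notin L_{\pll}(\nf(\Gamma))$, there is a valuation $v$ with $v(\Gamma)\subseteq\{1\}$ and $v(\varphi)=0$, arguing by induction on the structure of $\varphi$. The base case is $\varphi$ a variable $r$ with $r\notin\nf(\Gamma)$: I take $v$ to be the ``disjunctive'' valuation sending $r$ to $0$, every other variable to $1$, and interpreting each $\pl$ by $\max$. This gives $v(\varphi)=0$ and makes every seed true, the key point being that a \emph{compound} normal form $s=\alpha\pl\beta$ has $\nf(\alpha)\neq\nf(\beta)$ and hence contains at least two distinct variables (were $s$ written in a single variable $p$ we would have $\nf(s)=p\neq s$), so $s$ still sees a variable valued $1$ after $r$ is switched off, while a variable seed is automatically $\neq r$. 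In the inductive step $\varphi=\alpha\pl\beta\notin L_{\pll}(\nf(\Gamma))$ forces $\varphi\notin\nf(\Gamma)$ and, without loss of generality, $\alpha\notin L_{\pll}(\nf(\Gamma))$; the induction hypothesis supplies a valuation with $v(\Gamma)\subseteq\{1\}$ and $v(\alpha)=0$, and because one child of $\varphi$ is already $0$ the entry $\pl(0,\cdot)\ni 0$ lets me choose $v(\varphi)=0$. To turn this local choice into a genuine valuation I restrict the hypothesis valuation to $\sub(\nf(\Gamma)\cup\{\varphi\})$, reset the value at $\varphi$ to $0$, and extend by analyticity.

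I expect the delicate point to be exactly this last manoeuvre, since the formula $\varphi$ whose value I lower may itself occur as a subformula of a seed, so that setting $v(\varphi)=0$ threatens to falsify that seed. The feature that rescues the argument is once more that the seeds are normal forms: a compound normal form is genuinely two‑sourced, so it keeps an independent route to the value $1$ and survives the lowering of $\varphi$. A clean way to package this is to first observe that $\varphi\notin L_{\pll}(\nf(\Gamma))$ guarantees a variable $r\in\var(\varphi)\setminus\nf(\Gamma)$ — otherwise $\varphi$ would be built from seed‑variables alone and would lie in $L_{\pll}(\nf(\Gamma))$ — and then to drive a chain of $0$'s from a fixed occurrence of that $r$ up to the root $\varphi$, verifying that this chain never coincides with the sole support of a seed. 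Pinning down this non‑interference, rather than any of the propagation steps, is where the care is genuinely required.
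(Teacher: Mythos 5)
Your right-to-left direction and your base case are sound; indeed your base case is cleaner than the one in the paper, whose literal valuation ($v(\psi)=0$ iff $\psi=p$) is not even an $\BMt_{\pll}$-valuation (it gives $v(p\pl p)=1\notin\pl(0,0)$), whereas your disjunctive valuation genuinely is one. The gap is exactly where you flagged it, in the inductive step, and unfortunately it cannot be closed. The manoeuvre ``restrict to $\sub(\nf(\Gamma)\cup\{\varphi\})$, reset $v(\varphi)$ to $0$, extend by analyticity'' is illegitimate when $\varphi$ occurs as a subformula of a seed: after the reset the function may violate the Nmatrix constraint at an ancestor of $\varphi$ inside a seed, and analyticity extends partial valuations, it does not repair broken ones. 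Your proposed rescue --- that a compound seed is ``two-sourced'' and so keeps an independent route to the value $1$ --- is false: one of the two sources of a seed can be $\varphi$ itself, and keeping that seed designated then \emph{forces} the other source to $1$; such forcings coming from several seeds can propagate back down and force $\varphi$ to $1$.

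Concretely, take $\Gamma=\{p\pl(p\pl q),\; q\pl(p\pl q)\}$ and $\varphi=p\pl q$. Both premises are their own normal forms, so $\nf(\Gamma)=\Gamma$ and $\varphi\notin L_{\pll}(\nf(\Gamma))$. Yet $\Gamma\vdash_{\pll}\varphi$: if $v(p\pl q)=0$ and $v(p\pl(p\pl q))=1$, then $1\in\pl(v(p),0)$ forces $v(p)=1$; symmetrically, $v(q\pl(p\pl q))=1$ forces $v(q)=1$; but then $v(p\pl q)\in\pl(1,1)=\{1\}$, a contradiction. So here every chain fails (the chain from $p$ kills the first seed, the chain from $q$ kills the second), no patching of the inductive-hypothesis valuation can succeed, and the ``non-interference'' you deferred is not merely delicate but false. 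You should know this is not a defect of your write-up alone: the proof given in the paper makes the same move even more casually (``define $v'$ that coincides with $v$ on the subformulas of $\Gamma$ and makes $v'(\varphi_1\pl\varphi_2)=0$''), and the example above refutes the proposition as stated --- cross-premise sharing of the subformula $p\pl q$ creates semantic forcings that the syntactic set $L_{\pll}(\nf(\Gamma))$ does not see. Your instinct about where the care was required was exactly right; what the attempt (and the paper) misses is that at that point the statement itself breaks down.
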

\begin{proof}
 From right to left, let $v$ be an
 $\Mt_{\pll}$-valuation such that $v(\Gamma)=\{1\}$. 
  As
  $\psi\vdash_{\pll} \nf(\psi)$ for every $\psi\in\Gamma$ we get that $v(\nf(\Gamma))=\{1\}$, and
 from $ {p, q}\vdash_{\pll}{p\pl q}$ we get that $v(\psi)=1$ for $\psi\in L_{\pll}(\nf(\Gamma))$.  

 From left to right, we show that if $\varphi\notin L_{\pll}(\nf(\Gamma))$ then there is a $\BMt_{\pll}$-valuation such that $v(\Gamma)=\{1\}$ and $v(\varphi)=0$.
 By induction on the structure of $\varphi$.
  If $\varphi=p\in P$ then $v$ such that $v(\psi)=0$ iff $\psi=p$ 
   does the job, as every formula $\Gamma$ must contain 
 some variable different from $p$. 
 If $\varphi=\varphi_1\pl \varphi_2$ then there must be $i\in \{1,2\}$ such that $\varphi_i\notin L_{\pll}(\Gamma)$. 
 By induction hypothesis there is $v$ such that $v(\Gamma)=1$ and $v(\varphi_i)=0$. Hence, we can define some $v'$ that coincides with $v$ on the set of subformulas of $\Gamma$, 
and makes $v'(\varphi_1\pl \varphi_2)=0$.
\end{proof}

For $n\in \nats$, let 
\begin{align*}
 R_n&=\Bigl\{\frac{\varphi}{p}:\varphi\in L_\pl(\{p\}), \npl(\varphi)\leq n\Bigr\}\\
 R_\omega&=\bigcup_{i<\omega} R_i\\
 R^\scc_{\pll}&=\Bigl\{\frac{p\, , \, q}{p\pl q}\Bigr\}\cup R_\omega
\end{align*}
where $\npl(\varphi)$ is the number of occurrences of $\pl$ in $\varphi$.
%
%

\begin{theorem}\label{notfinax}\em
 $\vdash_{\pll}$ is axiomatized by $R^\scc_{\pll}$ and it is not finitely axiomatizable. 
\end{theorem}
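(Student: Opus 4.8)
The plan is to establish the two claims separately: first that $R^\scc_{\pll}$ is a sound and complete basis for $\vdash_{\pll}$, and then to prove non-finite-axiomatizability by exhibiting a strictly increasing chain of finitely generated sublogics whose union is $\vdash_{\pll}$. For the axiomatization, soundness is immediate: the rule $\frac{p,q}{p\pl q}$ holds because $\pl_{\BMt}(1,1)=\{1\}$, and each $\frac{\varphi}{p}\in R_\omega$ holds because $\varphi\in L_{\pll}(\{p\})$ yields $\varphi\dashv\vdash_{\pll}\nf(\varphi)=p$, as already observed before the theorem. For completeness I would invoke Proposition~\ref{prop:scpl}: if $\Gamma\vdash_{\pll}\varphi$ then $\varphi\in L_{\pll}(\nf(\Gamma))$, and I build the derivation in two stages. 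First, for each relevant $\psi\in\Gamma$ I write $\psi=\eta[\nf(\psi)/p]$ with $\eta\in L_{\pl}(\{p\})$ and apply the substitution instance $p\mapsto\nf(\psi)$ of the rule $\frac{\eta}{p}\in R_\omega$ to pass from $\psi$ to $\nf(\psi)$; since $\npl(\eta)$ is finite this rule lies in some $R_k$. Second, I reconstruct $\varphi$ from the set $\nf(\Gamma)$ by induction on structure, applying $\frac{p,q}{p\pl q}$ at each $\pl$-node. Only finitely many $\psi$ are needed, giving $\vdash_{R^\scc_{\pll}}=\vdash_{\pll}$.

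For the second claim, let $\vdash_n$ be the single-conclusion logic axiomatized by $\{\frac{p,q}{p\pl q}\}\cup R_n$. Since $R^\scc_{\pll}=\{\frac{p,q}{p\pl q}\}\cup\bigcup_n R_n$ and single-conclusion derivations are finite, $\bigcup_n{\vdash_n}=\vdash_{\pll}$. If $\vdash_{\pll}$ had a finite basis $R^*$, then each of its (finitely many) rules is a valid consequence, hence derivable using finitely many instances of $R^\scc_{\pll}$, and so belongs to some $\vdash_{n_r}$; taking $N=\max_r n_r$ gives $R^*\subseteq{\vdash_N}$, whence $\vdash_{\pll}=\vdash_{R^*}\subseteq{\vdash_N}\subseteq\vdash_{\pll}$ and thus $\vdash_N={\vdash_{N+1}}$. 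It therefore suffices to prove that the chain is strictly increasing, i.e. that $\vdash_n\subsetneq\vdash_{n+1}$ for every $n$.

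The core step is the separation, for which I would construct, for each $n$, a finite deterministic matrix $\Mt_n$ that validates $\{\frac{p,q}{p\pl q}\}\cup R_n$ yet refutes $\pl^{n+1}(p)\vdash p$ (a consequence already present in $\vdash_{n+1}$ via the rule $\frac{\pl^{n+1}(p)}{p}\in R_{n+1}$). Take $V=\{0,1,\dots,n,n+1,\bot\}$, $D=\{n+1\}$, and set
$$x\pl y=\begin{cases} n+1 & \text{if } x=y=n+1,\\ i+1 & \text{if } (x,y)=(i,0)\text{ with }0\le i\le n,\\ \bot & \text{otherwise.}\end{cases}$$
Evaluating $\pl^{k}(p)$ at $v(p)=0$ gives value $k$ by right-leaning iteration, so $\pl^{n+1}(p)$ is designated while $p$ is not, refuting the target rule; and the combining rule holds since $(n+1,n+1)$ is the only designated pair. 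The heart of the verification is $R_n$-soundness, namely that for every undesignated input $c$ and every one-variable formula $\varphi$ with $\npl(\varphi)\le n$ the value of $\varphi$ never reaches $n+1$. This rests on two rigidity observations: the value $0$ is produced only by leaves (no operation outputs $0$), and each orbit value $j\ge 1$ arises solely from $(j-1)\pl 0$; hence from all-$0$ leaves at least $n+1$ nodes are required to reach $n+1$, while from any leaf $c\neq 0$ the value $0$ never appears and $\pl$ collapses into the absorbing trap $\bot$, so $n+1$ is unreachable altogether. Substitution-invariance of matrix consequence then lifts this to all instances. I expect precisely this verification — that the trap $\bot$ together with the leaf-only status of $0$ prevents any balanced tree or off-orbit value from shortcutting to $n+1$ in fewer than $n+1$ steps — to be the main obstacle; granting it, $\vdash_n\subsetneq{\vdash_{n+1}}$ and the conclusion of the previous paragraph closes the argument.
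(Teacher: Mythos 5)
Your proposal is correct, and its skeleton coincides with the paper's: the same completeness argument via Proposition~\ref{prop:scpl} (the rules in $R_\omega$ take $\Gamma$ to $\nf(\Gamma)$, and $\frac{p\,,\,q}{p\pl q}$ rebuilds $L_{\pll}(\nf(\Gamma))$), the same chain $\vdash_n$ axiomatized by $\{\frac{p\,,\,q}{p\pl q}\}\cup R_n$, and the same reduction of non-finite-axiomatizability to strictness of that chain (the paper outsources this step to \cite[Thm.2.2.8]{Woj}, while you reprove it inline from the finiteness of Hilbert derivations; both are fine). Where you genuinely diverge is the separation witness. The paper uses the $(n{+}2)$-valued \emph{Nmatrix} $\Mt_n=\tuple{\{a_0,\ldots,a_n,1\},\cdot_n,\{1\}}$, in which valuations count down $a_n,a_{n-1},\ldots,a_0$ and the non-deterministic entries $\pl_n(a_0,a_n)=\{1,a_0\}$ and $\pl_n(1,x)=\{1,x\}$ permit a jump to the designated value only after $n{+}1$ nested occurrences of $\pl$; you instead build a \emph{deterministic} $(n{+}3)$-valued matrix that counts up $0,1,\ldots,n{+}1$, with an absorbing trap $\bot$ playing the bookkeeping role that non-determinism plays in the paper. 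It is the same counting idea, but your version buys something the paper's does not make explicit: the chain is separated by ordinary (deterministic) matrices, and determinism makes the soundness verification self-contained since each valuation is uniquely determined by $v(p)$. Moreover, the paper only checks the linear rules $\pl^k(p)\vdash_{\Mt_n}p$ for $k\leq n$ and dismisses the soundness of the remaining rules of $R_n$ (arbitrary $\varphi\in L_{\pll}(\{p\})$ with $\npl(\varphi)\leq n$) as easy; your two rigidity observations --- that $0$ occurs only at leaves and that each $j\geq 1$ is produced only from $(j{-}1)\pl 0$, with the balanced production $(n{+}1)\pl(n{+}1)$ disposed of by induction since each of its arguments already costs $n{+}1$ nodes --- carry out precisely the verification the paper leaves implicit, so on this point your write-up is the more complete of the two.
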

\begin{proof}
That $\vdash_{\pll}{=}\vdash_{R^\scc_{\pll}}$ follows easily from Prop.~\ref{prop:scpl} by observing that the rules in $R_\omega$ are enough to produce $\nf(\Gamma)$ from $\Gamma$
 and
 $\frac{p\, , \, q}{p\pl q}$ is enough to generate every formula in $L_{\pll}(\nf(\Gamma))$ from  $\nf(\Gamma)$.


To see that $\vdash_{\pll}$ is not finitely axiomatizable,
it is enough to show that $\vdash_{\pll}$ is the limit of an infinite strictly increasing sequence $\vdash_n$ (see \cite[Thm.2.2.8]{Woj}). 
Consider the logic $\vdash^n$ axiomatized by 
 $\{\frac{p\, , \, q}{p \pl q}\}\cup R_n$ for each $n\in \nats$.
Clearly, $\vdash_n {\subseteq}\vdash_{n+1} {\subseteq} \vdash_{\pll}$ for each $n$ and $\vdash_{\pll}$ is the limit of this sequence.
It remains to show that $\vdash_n{\subsetneq} \vdash_{n+1}$.
 Let
 $\Mt_n=\tuple{\{a_0,\ldots,a_n,1\},\cdot_n,\{1\}}$ with
  %
\begin{align*}
  \pl_n(1,x)&=\pl_n(x,1)=\{1,x\}\\
  \pl_n(a_0,a_n)&=\pl_n(a_n,a_0)=\{1,a_0\}\\
 \pl_n(a_i,a_j)&=\bigl\{a_{max(0,min(i,j)-1)}\}: \{i,j\}\neq \{0,n\}\bigr\}
\end{align*}
It is easy to check that $p\,,\,q\vdash_{\Mt_n} p\pl q$ and $\pl^{k}(p)\vdash_{\Mt_n}p$ for $k\leq n$.
 To show that $\pl^{n+1}(p)\not\vdash_{\Mt_n}p$, 
 consider an $\Mt_n$-valuation $v$ such that
 $v(p)=a_n$, $v(p\pl p)=a_{n-1}$, \ldots, $v(\pl^n(p))=a_{0}$ and $v(\pl^{n+1}(p))=v(\pl^{n}(p)\pl p)=1$.
\end{proof}


\subsection{Allowing multiple-conclusion rules}

The fact that for certain finite Nmatrices $\Mt$, $\vdash_\Mt$ is non-finitely axiomatizable using single-conclusion rules is nothing that non-determinism can be blamed for.
In \cite{wronski74}, Wr\'onski shown that $\vdash_\CMt$ for the
 (deterministic) $\Sigma_\bullet$-matrix $\CMt=\{\{0,1,2\},\cdot_\CMt,\{2\}\}$
where $\Sigma_\bullet$ contains a single binary connective $\bullet$ and
\begin{center}
          \begin{tabular}{c | c c c}
   $\bullet_{\CMt}$ &  $0$ & $1$ & $2$   \\ 
    \hline
    $0$ & $1$ & $2$ & $2$ \\ 
     $1$  &$2$ & $2$ & $2$\\
     $2$  &$1$ & $2$ & $2$    
  \end{tabular}	
\end{center}
is not finitely (single-conclusion) axiomatizable.
However, $\der_\CMt$ is axiomatized multiple-conclusion by the following $6$ rules \cite{synt}:
$$\frac{ }{\;(p\bullet q)\bullet (p\bullet q)\; }\qquad \frac{q}{\;p\bullet q\;}\qquad \frac{q\bullet q}{\;q\,,\,p\bullet q\;}$$
$$\frac{p\bullet p}{\;p\,,\,p\bullet q\;}\qquad \frac{\;p\;,\;p\bullet q\;}{\,q\bullet q}\qquad \frac{\;p\bullet q\;}{\;p\bullet p\;,\;q\bullet q\;}$$

The first advantage of working in the multiple-conclusion setting is that every finite (deterministic) matrix is finitely axiomatizable \cite[Thm.19.12]{ShoesmithSmiley},
whereas in the single-conclusion this fails already for
matrices of size $3$.
%
%
%
%
%
%
%
%
%

In \cite{synt} we shown that this result could 
be extended to every finite 
Nmatrix provided it is \emph{monadic}, 
 a reasonable expressiveness requirement. 
An Nmatrix is $\Mt=\tuple{V,\cdot_\Mt,D}$ is \emph{monadic} if there is a set of formulas in one variable $S\subseteq L_{\Sigma}(\{p\})$ \emph{separating} $\Mt$, that is, such that for each every pair of distinct elements of $\Mt$ there is $\varphi\in S$ such that
$\varphi_\Mt(x)\subseteq D$ and $\varphi_\Mt(y)\subseteq V\setminus D$, or $\varphi_\Mt(x)\subseteq V\setminus D$ and $\varphi_\Mt(y)\subseteq D$.
In the $3$-valued matrix $\CMt$ this requirement is met with $S=\{p,p\bullet p\}$ which allowed us to produce the axiomatization above.
\smallskip

Moreover, it is easy to check that for any signature $\Sigma$ and Boolean Nmatrix $\Mt=\tuple{{\bf 2},\cdot_\Mt,\{1\}}$ we have that the set $S=\{p\}$ separates $\Mt$.
In this (Boolean) case, 
the general strategy to axiomatize $\der_\Mt$ introduced in \cite{synt}  
boils down to collecting the rules $R_{\conn,\vec{x}}$ for each $\conn\in \Sigma^{(n)}$ and $\vec{x}\in {\bf 2}^n$
 where $\conn_\Mt(\vec{x})\neq\{0,1\}$: 
\begin{itemize}
 \item[] if $\conn_\Mt(\vec{x})=\{0\}$ make
 $$R_{\conn,\vec{x}}=\frac{\{p_i:x_i=1\}\cup \{\conn(p_1,\ldots,p_k)\}}{\{p_i:x_i=0\}}$$ 
 
\item[] if $\conn_\Mt(\vec{x})=\{1\}$ make
 
 $$R_{\conn,\vec{x}}=\frac{\{p_i:x_i=1\}}{\{p_i:x_i=0\}\cup \{\conn(p_1,\ldots,p_k)\}}$$
 
\end{itemize}

Note that this axiomatization is completely modular on  the connectives being considered.
Furthermore, it is modular in the entries of the table defining each connective, 
a single rule is collected for each entry where $\conn_\Mt(\vec{x})$ is a singleton.
Yielding the following axiomatization of $\der_{\pll}$.

\begin{theorem}\label{finax}\em
$\der_{R_{\pll}}\,=\,\der_{\pll}$ with 
 $$R_{\pll}\,=\,\Bigl\{
 \frac{p\, , \, q}{p\pl q}\ _{r^\scc_{\pll}} \,,\, 
 \frac{p\pl q}{p\, , \, q}\ _{r^\mc_{\pll}}\Bigr\}
 $$
\end{theorem}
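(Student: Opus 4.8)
The plan is to establish the two inclusions $\der_{R_{\pll}}\subseteq\der_{\pll}$ (soundness) and $\der_{\pll}\subseteq\der_{R_{\pll}}$ (completeness). Before starting it is worth observing that the two rules in $R_{\pll}$ are exactly those produced by the general recipe recalled just above, applied to the only two entries of the platypus table that are singletons: $\pl_{\BMt}(1,1)=\{1\}$ yields $r^\scc_{\pll}=\frac{p,q}{p\pl q}$, while $\pl_{\BMt}(0,0)=\{0\}$ yields $r^\mc_{\pll}=\frac{p\pl q}{p,q}$ (the mixed entries $\pl_{\BMt}(0,1)=\pl_{\BMt}(1,0)=\{0,1\}$ contribute no rule). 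Since $\BMt_{\pll}$ is monadic, being separated by $S=\{p\}$, one could simply invoke the general completeness theorem of \cite{synt}; I will instead give the direct argument, which in this case is entirely transparent.

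Soundness is immediate by inspecting the table. For $r^\scc_{\pll}$, any $\BMt_{\pll}$-valuation $v$ with $v(p)=v(q)=1$ has $v(p\pl q)\in\pl_{\BMt}(1,1)=\{1\}$, so $p,q\der_{\pll}p\pl q$. For $r^\mc_{\pll}$, if $v(p\pl q)=1$ then $(v(p),v(q))\neq(0,0)$, since otherwise $v(p\pl q)\in\pl_{\BMt}(0,0)=\{0\}$; hence $v(p)=1$ or $v(q)=1$, giving $p\pl q\der_{\pll}p,q$. As $\der_{\pll}$ is a Scottian logic, closing $R_{\pll}$ under (\textbf{O})(\textbf{D})(\textbf{C})(\textbf{S}) stays within $\der_{\pll}$, so $\der_{R_{\pll}}\subseteq\der_{\pll}$.

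For completeness I would argue by contraposition, building a countermodel from a saturated set. Suppose $\Gamma\not\der_{R_{\pll}}\Delta$. Since $\der_{R_{\pll}}$ satisfies cut (\textbf{C}), it is characterized by its Lindenbaum bundle; concretely there is a partition $\tuple{T,\overline{T}}$ of $L_{\pll}(P)$ with $\Gamma\subseteq T$, $\Delta\subseteq\overline{T}$ and $T\not\der_{R_{\pll}}\overline{T}$. Define a bivaluation by $v(\psi)=1$ iff $\psi\in T$. The heart of the argument is to verify that $v$ is a $\BMt_{\pll}$-valuation, i.e. $v(\varphi\pl\psi)\in\pl_{\BMt}(v(\varphi),v(\psi))$ for every subformula $\varphi\pl\psi$; only the two singleton entries impose a constraint. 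If $v(\varphi)=v(\psi)=1$ then $\varphi,\psi\in T$, and instantiating $r^\scc_{\pll}$ by (\textbf{S}) under $p\mapsto\varphi,\,q\mapsto\psi$ gives $\varphi,\psi\der_{R_{\pll}}\varphi\pl\psi$, so by dilution the saturation of $T$ forces $\varphi\pl\psi\in T$, matching $\pl_{\BMt}(1,1)=\{1\}$. Symmetrically, if $v(\varphi\pl\psi)=1$ then $\varphi\pl\psi\in T$, and $r^\mc_{\pll}$ yields $\varphi\pl\psi\der_{R_{\pll}}\varphi,\psi$, so $T\not\der_{R_{\pll}}\overline{T}$ prevents both $\varphi,\psi\in\overline{T}$, ruling out exactly the forbidden case $v(\varphi\pl\psi)=1\notin\pl_{\BMt}(0,0)$. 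The mixed entries $\{0,1\}$ accept any value of $v(\varphi\pl\psi)$ and so require no check — this is precisely where non-determinism buys the reduction to two rules. Thus $v(\Gamma)\subseteq\{1\}$ and $v(\Delta)\cap\{1\}=\emptyset$, witnessing $\Gamma\not\der_{\pll}\Delta$.

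The main obstacle is the completeness half, and within it the existence of the saturated partition $\tuple{T,\overline{T}}$: it rests on the cut property of the generated relation $\der_{R_{\pll}}$ and is the multiple-conclusion Lindenbaum lemma of \cite{ShoesmithSmiley,CCALJM}. Once such a $T$ is available, verifying that the induced bivaluation respects the table is routine, exactly because each singleton entry of $\pl$ is mirrored by one of the two rules while the non-deterministic entries are unconstrained.
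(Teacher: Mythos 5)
Your proof is correct, but it takes a different route from the paper. The paper offers no standalone proof of Theorem~\ref{finax}: it obtains the axiomatization as a direct instance of the general recipe of \cite{synt} for monadic Nmatrices, having just observed that any Boolean Nmatrix is separated by $S=\{p\}$ and that the recipe collects exactly one rule per deterministic entry of the table --- which for $\pl$ yields precisely $r^\scc_{\pll}$ and $r^\mc_{\pll}$. You acknowledge this route but instead unfold the argument explicitly: soundness by inspection of the table, and completeness via the Shoesmith--Smiley cut-for-sets/Lindenbaum partition $\tuple{T,\overline{T}}$, checking that the induced bivaluation respects the two singleton entries (the rule $r^\scc_{\pll}$ forces $\varphi\pl\psi\in T$ when $\varphi,\psi\in T$, matching $\pl(1,1)=\{1\}$; the rule $r^\mc_{\pll}$ forbids $\varphi\pl\psi\in T$ with $\varphi,\psi\in\overline{T}$, matching $\pl(0,0)=\{0\}$; the mixed entries impose nothing). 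This is essentially the specialization of the proof in \cite{synt} to this two-valued case, and your unfolding is sound: the partition exists by properties ({\bf C}), ({\bf O}) and ({\bf D}) of $\der_{R_{\pll}}$, and all four entries of the table are accounted for. What your version buys is self-containedness and a transparent explanation of why exactly two rules suffice; what the paper's citation-based route buys is modularity and generality --- the same recipe axiomatizes any Boolean Nmatrix connective-by-connective and entry-by-entry, which the paper then exploits immediately afterwards to axiomatize $\der_{\land\pl}$, $\der_{\lor\pl}$ and $\der_{\land\lor\pl}$ by simple unions of rule sets. One cosmetic slip: you say the valuation condition must be verified ``for every subformula $\varphi\pl\psi$,'' but $v$ is total on $L_{\pll}(P)$, so the check is over every formula of that form; your argument in fact handles arbitrary $\varphi,\psi$, so nothing is lost.
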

{
As a corollary we obtain that $\pl$ is the mysterious connective introduced in \cite[Exercise 4.31.5]{Humb}, by observing
 that a multiple-conclusion rule $\frac{\Gamma}{\Delta}$ is equivalent to the metarule
$\frac{\bigl\{\Gamma'\vdash A: A\in \Gamma\bigr\}\bigcup\bigl\{\Gamma_i,B_i\vdash C: B_i\in \Delta\bigr\}}{\Gamma'\,,\,\bigcup_i\Gamma_i\vdash C}$.
}

This recipe can be applied to the deterministic case too, yielding $\der_\land=\der_{R^\scc_\land}$ (so $\der_\land=\der_{\vdash_\land}$)
and $\der_\lor=\der_{R^\mc_\lor}$ with $R^\mc_\lor=\bigl\{\frac{p}{p\lor q},\frac{q}{p\lor q},\frac{p\lor q}{p\,,\,q}\bigr\}$.
Using the modularity on the connectives immediately we obtain that, for $\conw\in \{\land\pl,\lor\pl,{\land\!\lor\!\pl}\}$ we have $\der_{\conw}=\der_{R^\mc_{\conw}}$
 with $R^\mc_{\land\pl}=R^\scc_{\land}\cup R^\mc_{\pll}$,
$R^\mc_{\lor\pl}=R^\mc_{\lor}\cup R^\mc_{\pll}$ and $R^\mc_{\land\lor\pl}=R^\scc_{\land}\cup R^\mc_{\lor}\cup R^\mc_{\pll}$.

\begin{example}\label{ex:derivations}\em
 The following three multiple-conclusion derivations illustrate how multiple-conclusion derivations can be useful even if the set of conclusions is a singleton.
 
\begin{minipage}{.23\textwidth}
\begin{center}
$p\der_{\pll} \pl^3(p)$\\[.1cm]
\frame{\begin{tikzpicture}
\tikzset{level distance=55pt}
\tikzset{sibling distance=7pt}
 \Tree[.$p$ 
  \edge node[auto=right] {$_{r^\scc_{\pll}}$};
  [.$p\pl p$
   \edge node[auto=right] {$_{r^\scc_{\pll}}$};
   [.$\pl^3(p)$
      ]]    ] 
\end{tikzpicture}}
\end{center}
\end{minipage}\quad
\begin{minipage}{.28\textwidth}
\begin{center}
$\pl^3(p)\der_{\pll} p$\\[.1cm]
%
\frame{\begin{tikzpicture}
\tikzset{level distance=55pt}
\tikzset{sibling distance=7pt}
 \Tree[.$(p\pl p)\pl p$ 
  \edge node[auto=right] {$_{r^\mc_{\pll}}$};
  [.$p\pl p$
   \edge node[auto=right] {$_{r^\mc_{\pll}}$};
   [.$p$
   ] [.$p$
      ]]  [.$p$
    ] ] 
\end{tikzpicture}}
\end{center}
\end{minipage} 
\begin{minipage}{.4\textwidth}
\begin{center}
$(p\land q)\pl (q\land p)\der_{\land\pl} p\land q$\\[.1cm]
\frame{\begin{tikzpicture}
\tikzset{level distance=30pt}
\tikzset{sibling distance=5pt}
 \Tree[.$(p\land q)\pl (q\land p)$ 
 \edge node[auto=right] {$_{r^\mc_{\pll}}$};
  $p\land q$ 
  [.$q\land p$
   \edge node[auto=right] {$_{r^\land_1}$};
   [.$p$ 
    \edge node[auto=right] {$_{r^\land_2}$};
   [.$q$ 
    \edge node[auto=right] {$_{r^\land_3}$};
   [.$p\land q$ ]
   ]
   ]
   ] ] 
\end{tikzpicture}}
\end{center}
\end{minipage}\\

The first derivation examplifies how formulas in $L_{\pll}(\{p\})$ can be derived from $p$, using the rule $r^\scc_{\pll}$ shared by $R^\scc_{\pll}$ and $R^\mc_{\pll}$.
 In the second we show 
 how in this more expressive setting we can
 recover 
 the infinite rules in $R_\omega$ needed to axiomatize $\vdash_{\pll}$.
 In the second we show a valid rule in $\BMt_{\land\pl}$ that will be useful later on. 
 %
 \hfill$\triangle$
\end{example}

Note that there are Nmatrices that are finitely axiomatizable using only single-conclusion rules. 
For example, the above strategy applied to the Boolean $\Sigma_\to$-Nmatrix $\BMt_\to$ where $\Sigma_\to$ contains a single
 binary connective $\to$ interpreted by the table 
\begin{center}
  \begin{tabular}{c | c c c }
$\to$ & $0$ & $1$  \\
\hline
$0$&  $ 0 $ & $ 0,1 $ \\
$1$ &$0,1$ & $1$ 
\end{tabular}
\end{center}
%
yields the single rule 
of \emph{modus ponens} $R_{\mathsf{mp}}=\{\frac{p\,,\,p\to q}{q}\}$. Implying that $\der_\Mt=\der_{R_{\mathsf{mp}}}$ is the smallest companion of $\vdash_R$,
 $\der_{\vdash_R}$.
Note that $\vdash_{\BMt_\to}$ is also not characterizable by finite matrix \cite{finval,synt}.

There is still another advantage in considering multiple-conclusion rules.
The result in \cite{synt} extends the result in \cite{ShoesmithSmiley} in yet another way, a big novelty on Hilbert-style calculi (that avoid the inclusion of meta-language in the derivation mechanism).
The obtained calculi are analytical, in the sense that 
$\Gamma\der_R \Delta$ if and only if there is a $R$-derivation where only subformulas of $\Gamma$ and $\Delta$ appear.
This allows for effective purely symbolic decision procedures for the logic and 
 proof-search mechanism, see \cite{synt,wollic19}.
%
%
%
%
%
%

\subsection{Compositionality: single- vs multiple-conclusion} 

As we mentioned above there are great advantages on the compositionality front in avoiding any meta-language. 
The smallest  (single- or multiple- conclusion) logic that contains two (single- or multiple- conclusion) logics is axiomatized by joining axiomatizations of both logics.
This allows us to control the desired interactions. 
Such compositional mechanisms also  enlighten us regarding the divide single-/multiple- conclusion. 
In \cite{soco}, by taking profit from the complete characterization of the Boolean clones (of functions) given by Post \cite{Post41},  we have shown that fragments of 
classical logic seen as a single-conclusion logics cannot in general be axiomatized by joining the single-conclusion axiomatizations of each of the connectives in that fragment.
This sharply contrasts with what happens in multiple-conclusion, as is evident by the general recipe for axiomatizing Boolean Nmatrices presented above.
The fact is that the single-conclusion fragment of the fibring of two multiple-conclusion logics differs from the fibring of their single-conclusion fragments. 
As shown in Example~\ref{ex:derivations} $(p\land q)\pl (q\land p)\der_{R^\mc_{\land\pl}} p\land q$,
however by observing the rules in $R^\scc_{\pll}\cup R^\scc_\land$ one can sense that 
$(p\land q)\pl (q\land p)\not\vdash_{R^\scc_{\pll}\cup R^\scc_\land} p\land q$. How can we prove it? 

Recent work (some still unpublished) on fibring semantics shines a new light on this phenomena. We will now give a glimpse over it.
%
We say that an Nmatrix $\Mt=\tuple{V,\cdot_\Mt,D}$ is \emph{saturated} whenever for every $\vdash_\Mt$-theory $\Gamma$
 there is a $\Mt$-valuation $v$ such that that designates $v(\psi)\in D$ iff $\psi\in\Gamma$. 
 Equivalently, $\Mt$ is
 saturated iff $\Gamma\not\vdash_\Mt \psi$ for every $\psi\in\Delta$ then $\Gamma\not\der_\Mt \Delta$. 
 Given two Nmatrices $\Mt_1=\tuple{V_1,\cdot_1,D_1}$ and $\Mt_2=\tuple{V_2,\cdot_2,D_2}$ over disjoint signatures 
let $\Mt_1\star \Mt_2=\tuple{V_{12},\cdot_{12},D_{12}}$ where $V_{12}=(D_1\times D_2)\cup (V_1\setminus D_1\times V_2\setminus D_2)$,
$D_{12}=D_1\cup D_2$ and $$\conn_{12}(\vec{x})=\{(y_1,y_2)\in V_{12}: y_i\in\conn_i \text{ if }\conn\in \Sigma_i\}.$$
In \cite{wollic} we have show that given two Nmatrices $\Mt_1=\tuple{V_1,\cdot_1,D_1}$ and $\Mt_2=\tuple{V_2,\cdot_2,D_2}$ over disjoint signatures 
their fibring is simply characterized by $\Mt_1\star \Mt_2$ whenever
$\Mt_1$ and $\Mt_2$ are \emph{saturated}. 
For every $\Mt=\tuple{V,\cdot_\Mt,D}$ let
 $\Mt^\omega=\tuple{V^\omega,\cdot_\omega,D^\omega}$ where $\cdot_\omega$ interprets each connective component wise. 
As for any Nmatrix $\Mt$ we have that $\vdash_\Mt=\vdash_{\Mt^\omega}$ and $\Mt^\omega$ is
  always saturated 
we obtain a general semantics for 
joining single-conclusion rules:
if $\vdash_{\Mt_i}=\vdash_{R_i}$ for $i\in \{1,2\}$ then $\vdash_{R_1\cup R_2}=\vdash_{\Mt_1^\omega\star \Mt_2^\omega}$.

We know that $\BMt_\land$ is saturated, hence $\vdash_{R^\scc_{\pll}\cup R^\scc_\land}=\vdash_{\BMt_{\pll}^\omega\star \BMt_\land}$.
It is not hard to show that $\BMt^\omega_{\pll}\star \BMt_\lor$ is isomorphic\footnote{Using ${\bf 2}^\omega\approx \wp(\nats)\approx \{(\nats,1)\}\cup 
\{(0,X):X\subsetneq \nats\}$.}
 to $\tuple{\wp(\nats),\widetilde{\cdot},\{\nats\}}$ where
$$X\widetilde{\pl} 
Y=\{Z\in \wp(\nats):X\cap Y\subseteq Z\subseteq 
\nats\setminus(X\cup Y)\}$$
$$X\tilde{\land} 
Y=
\begin{cases}
 \{\nats\}& \text{ if }X=Y=\nats\\
  \wp(\nats)\setminus\{\nats\}& \text{ otherwise.}\\
\end{cases}
 $$
 Now we can finally show that indeed $(p\land q)\pl (q\land p)\not\vdash_{R^\scc_{\pll}\cup R^\scc_\land} p\land q$.
Clearly defining $v(p)=v(q)=v(p\land q)=v(q\land p)=\emptyset$ and 
$v((p\land q)\pl (q\land p))=\nats$ makes $v$ a partial $\BMt_{\pll}^\omega\star \BMt_\land$-valuation defined over a set closed for subformulas and hence the result follows. 
%
As a consequence we get that $\BMt_{\pll}$ cannot be saturated. Furthermore there is no finite
saturated  Nmatrix characterizing $\vdash_{\pll}$. 

\begin{proposition}\em
 There is no finite saturated Nmatrix $\Mt$ such that $\vdash_\Mt{=}\vdash_{\pll}$.
\end{proposition}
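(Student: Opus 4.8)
The plan is to assume, for contradiction, that some finite Nmatrix $\Mt=\tuple{V,\cdot_\Mt,D}$ is saturated and satisfies $\vdash_\Mt=\vdash_{\pll}$, and to exhibit a single $\vdash_{\pll}$-theory that $\Mt$ cannot realise. The tempting first move is to reuse the argument that $\BMt_{\pll}$ is not saturated, fibring $\Mt$ with $\land$ and exploiting $(p\land q)\pl(q\land p)\not\vdash_{R^\scc_{\pll}\cup R^\scc_\land}p\land q$. I expect this to be the main obstacle and ultimately a dead end: that route detects the non-saturation of $\BMt_{\pll}$ only because $\pl(0,0)=\{0\}$, which forces $(p\land q)\pl(q\land p)\vdash p\land q$ in $\BMt_{\pll}\star\BMt_\land$; for a hypothetical saturated $\Mt$ no such forcing is available (saturation precisely provides undesignated $a,b$ with $\pl_\Mt(a,b)\cap D\neq\emptyset$), so the combined consequence fails without contradiction. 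The real task is thus to find an obstruction insensitive both to non-determinism and to the number of values.

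First I would record the only semantic fact I need from $\vdash_\Mt=\vdash_{\pll}$. Since $\nf(p\pl p)=p$ yields $p\dashv\vdash_{\pll}p\pl p$, for every $a\in V$ the set $\pl_\Mt(a,a)$ must preserve designation: $\pl_\Mt(a,a)\subseteq D$ if $a\in D$, and $\pl_\Mt(a,a)\subseteq V\setminus D$ if $a\notin D$. Indeed, by analyticity any $b\in\pl_\Mt(a,a)$ is realised as $v(p\pl p)$ by a valuation with $v(p)=a$, so $b$ and $a$ must agree on designation. In particular the undesignated values are closed under the diagonal of $\pl_\Mt$.

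Next I would construct the offending theory. Let $N=|V|+1$, take distinct variables $p_1,\ldots,p_N$, and set $T=L_{\pll}(\{p_i\pl p_j:1\le i<j\le N\})$. Since each generator is in normal form, Prop.~\ref{prop:scpl} gives $T$ as a genuine $\vdash_{\pll}$-theory with $p_i\pl p_j\in T$ for $i<j$ while $p_i\notin T$ for every $i$ (a bare variable is not a $\pl$-combination of the generators). By saturation there is a valuation $v$ with $v(\psi)\in D$ iff $\psi\in T$; hence every $v(p_i)$ is undesignated and every $v(p_i\pl p_j)$ with $i<j$ is designated.

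Finally I would invoke pigeonhole: among the $N=|V|+1$ values $v(p_1),\ldots,v(p_N)$ two must coincide, say $v(p_i)=v(p_j)=a$ with $i<j$. Then $v(p_i\pl p_j)\in\pl_\Mt(a,a)\subseteq V\setminus D$ by the diagonal observation, so $v(p_i\pl p_j)$ is undesignated, contradicting $p_i\pl p_j\in T$. Hence no finite $\Mt$ can be both saturated and characterise $\vdash_{\pll}$. Read in $\BMt_{\pll}^\omega$, the same computation shows why infinitely many values are genuinely necessary: the diagonal obstruction only vanishes once $|V|$ is infinite, so no finite truncation suffices.
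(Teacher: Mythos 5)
Your proof is correct and is essentially the paper's own argument: both rest on the diagonal fact that $p\pl p\vdash p$ (via analyticity) forces $\pl_\Mt(a,a)\subseteq V\setminus D$ for undesignated $a$, combined with a pigeonhole over the finitely many truth-values applied to designated premises $\{p_i\pl p_j\}$ against undesignated conclusions $\{p_i\}$. The only difference is packaging: you instantiate the definition of saturation directly on the theory $L_{\pll}(\{p_i\pl p_j : i<j\})$, whereas the paper uses the stated equivalent formulation, showing $\Gamma_n\der_\Mt\Delta_n$ while $\Gamma_n\not\vdash_\Mt p_i$ for every $i$.
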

\begin{proof}
 Let $\Gamma_n=\{p_i\pl p_j: 0\leq i\neq j \leq n\}$ and $\Delta_n=\{p_i:0\leq i \leq n\}$.
 We show that given $\Mt=\tuple{V,\cdot_\Mt,D}$ such that $V\setminus D$ has atmost $n$ elements and
   $\vdash_{\pll} {\subseteq} \vdash_\Mt$ then 
$\Gamma_n\der_\Mt \Delta_n$.
Given $\Mt$-valuation $v$ either if $v(p_i)\in D$ 
for some  $0\leq i < n$ or $v(p_i)=v(p_j)\in V\setminus D$ for some $i\neq j$.
If the second case holds then since $p\pl p\der_{\pll} p$ we must have that $v(p_i\pl p_j)\notin V$.
Hence, if $v(\Gamma_n)\subseteq D$ then $v(\Delta_n)\cap D\neq \emptyset$.
As 
 $\{p_i\pl p_j: 0\leq i \neq j\leq n\}\not\vdash_\Mt p_i$ for $0\leq i \leq n$ we conclude that $\Mt$ is not saturated.
%
\end{proof}

It is easy to see that given two Boolean Nmatrices over disjoint signatures 
their strict product corresponds to joining the operations in a single Nmatrix.
If instead we consider the two $\Sigma_{\pll}$-Nmatrices $\BMt^\mathsf{in}_{\pll}=\tuple{\mathbf{2},\cdot_\mathsf{in},\{1\}}$ 
and $\BMt^\mathsf{el}_{\pll}=\tuple{\mathbf{2},\cdot_\mathsf{el},\{1\}}$ where
 \begin{center}
  \begin{tabular}{c | c c c }
$\pl_\mathsf{in}$ & $0$ & $1$  \\
\hline
$0$&  $ 0,1 $ & $ 0,1 $ \\
$1$ &$0,1$ & $1$ 
\end{tabular}\qquad 
  \begin{tabular}{c | c c c }
$\pl_\mathsf{el}$ & $0$ & $1$  \\
\hline
$0$&  $ 0 $ & $ 0,1 $ \\
$1$ &$0,1$ & $0,1$ 
\end{tabular}
\end{center}
and ignore the restriction on the definition of strict product above demanding that the signatures are disjoint, we obtain that  
$\BMt_{\pll}$ is isomorphic to $\BMt^\mathsf{in}_{\pll}\star \BMt^\mathsf{el}_{\pll}$ where $(0,0)$ and $(1,1)$ are renamed $0$ and $1$, respectively.
If we apply the axiomatization strategy introduced in the last subsection we obtain that 
$\der_{\BMt^\mathsf{in}_{\pll}}$ is axiomatized by the single rule $r^\scc_{\pll}$ and  $\der_{\BMt^\mathsf{in}_{\pll}}$ is axiomatized by the single rule $r^\mc_{\pll}$.
Hence, platypus' logic is the multiple-conclusion fibring of the two logics axiomatized by each of the 
rules that axiomatize $\der_\pl$, and
$\BMt_{\pll}$ is the strict product of the semantics characterizing each of these rules.
In general the strict product of two Nmatrices may end up with some entries with zero values, corresponding to entries where each of the Nmatrices output incompatible values.
There is a generalization of Nmatrices called PNmatrices where partiality is also allowed. 
We are working on a 
general description of fibred semantics 
that takes PNmatrices as semantical units.
%
%
%
%
%
%
%
%
%
%
%
%
%
%
%

\section{Complexity}\label{sec:comp}

In this section we show how moving to the richer multiple-conclusion setting may be essencial to capture an otherwise hidden behaviour of a certain connective.
Can the problem of deciding the single-conclusion logic and some of its multiple-conclusion companion differ in complexity?
It would not be hard to cook an artificial example where this is the case, however the logic of $\pl$ readily does the job.

\begin{theorem}\em \label{complexity}
 Deciding $\vdash_{\pll}$ is in $ {\bf P}$ 
and deciding $\der_{\pll}$ is {\bf coNP}-complete.
\end{theorem}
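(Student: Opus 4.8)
The plan is to handle the two halves by different means: Proposition~\ref{prop:scpl} already trivialises the single-conclusion fragment, whereas the multiple-conclusion hardness will come from a reduction that exploits the non-determinism of $\pl$ as a guessing device.

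For $\vdash_{\pll}\in\mathbf{P}$: by Proposition~\ref{prop:scpl} we have $\Gamma\vdash_{\pll}\varphi$ iff $\varphi\in L_{\pll}(\nf(\Gamma))$, and finitarity lets us assume $\Gamma$ finite. First I would compute $X=\nf(\Gamma)$ by the structural recursion $\nf(p)=p$ on variables, $\nf(\alpha\pl\beta)=\nf(\alpha)$ when $\nf(\alpha)$ and $\nf(\beta)$ are the \emph{same} formula, and $\nf(\alpha\pl\beta)=\alpha\pl\beta$ otherwise; each step is a syntactic equality test, so $X$ is computable in polynomial time. Then I would decide $\varphi\in L_{\pll}(X)$ by the recursion that returns true if $\varphi\in X$, recurses on both immediate subformulas if $\varphi=\alpha\pl\beta$, and fails on a variable outside $X$; the number of calls is bounded by the number of subformulas of $\varphi$, so this too is polynomial. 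Composing the two phases puts $\vdash_{\pll}$ in $\mathbf{P}$.

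Membership $\der_{\pll}\in\mathbf{coNP}$ is immediate from the analyticity recorded above: a witness of $\Gamma\not\der_{\pll}\Delta$ is a partial $\BMt_{\pll}$-valuation on the polynomially many subformulas of $\Gamma\cup\Delta$ that designates all of $\Gamma$ and none of $\Delta$, and verifying that it respects the table of $\pl$ is polynomial, so the complementary problem is in $\mathbf{NP}$. The hard part will be $\mathbf{coNP}$-hardness, which I would obtain by reducing \textsc{unsat}. The engine is the following reading of the non-determinism: for any valuation, $v(\alpha\pl\beta)=1$ is achievable exactly when $v(\alpha)\vee v(\beta)=1$, and $v(\alpha\pl\beta)=0$ is achievable exactly when $v(\alpha)\wedge v(\beta)=0$. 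Since a counter-valuation to $\Gamma\der_{\pll}\Delta$ must send every formula of $\Gamma$ to $1$ and every formula of $\Delta$ to $0$, putting a $\pl$-formula on the left imposes a disjunctive constraint on its subformulas while putting one on the right imposes a \textsc{nand}-constraint. Given a $3$-CNF $\phi=C_1\wedge\cdots\wedge C_m$ over $x_1,\ldots,x_n$, I would introduce fresh variables $a_i,b_i$ and set $\Gamma_\phi=\{a_i\pl b_i:1\le i\le n\}\cup\{t(C_k):1\le k\le m\}$ and $\Delta_\phi=\{b_i\pl a_i:1\le i\le n\}$, where $t$ sends a clause $\ell_1\vee\ell_2\vee\ell_3$ to $t(\ell_1)\pl(t(\ell_2)\pl t(\ell_3))$ with $t(x_i)=a_i$ and $t(\neg x_i)=b_i$. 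The complementary pair $a_i\pl b_i\in\Gamma_\phi$, $b_i\pl a_i\in\Delta_\phi$ (syntactically distinct, hence independently evaluable since $p\pl q\not\der_{\pll}q\pl p$) forces exactly one of $v(a_i),v(b_i)$ to be $1$, so the variables behave as a literal and its negation.

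It then remains to verify the reduction in both directions and conclude. For soundness of a satisfying assignment $\sigma$, I would set $v(a_i)=\sigma(x_i)$, $v(b_i)=1-\sigma(x_i)$, choose each clause $\pl$-node to output the disjunction of its children (always an admissible value), and output $1$ on each $a_i\pl b_i$ and $0$ on each $b_i\pl a_i$; iterating the disjunctive reading up each clause tree makes $v(t(C_k))=1$ because $\sigma$ satisfies $C_k$, and analyticity extends this partial valuation to a full counter-valuation. Conversely, any counter-valuation satisfies the gadget constraints, whence exactly one of $v(a_i),v(b_i)$ is $1$; reading $\sigma(x_i)=v(a_i)$ and propagating $v(t(C_k))=1$ down the $\pl$-tree via the achievability fact forces the disjunction of the three literal-values to be $1$, so $\sigma$ satisfies $\phi$. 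The one point that needs care is that $\pl$-subterms shared between clauses receive a single value; this causes no conflict because the disjunctive choice assigns each such subterm the value of the corresponding Boolean disjunction, which serves all its occurrences at once. Hence $\Gamma_\phi\der_{\pll}\Delta_\phi$ iff $\phi$ is unsatisfiable; the reduction is clearly polynomial, so $\der_{\pll}$ is $\mathbf{coNP}$-complete.
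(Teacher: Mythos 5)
Your proposal is correct and takes essentially the same route as the paper: tractability of $\vdash_{\pll}$ via Proposition~\ref{prop:scpl}, membership of $\der_{\pll}$ in $\mathbf{coNP}$ via the analyticity/witness argument for finite Nmatrices, and hardness by the very same reduction from 3-SAT to $\not\der_{\pll}$ (your $a_i,b_i$ are the paper's $q_{x_i},q_{\neg x_i}$, and your $\Gamma_\phi$ and $\Delta_\phi$ are exactly $\Gamma_{\mathcal{I}}\cup\Gamma_{\mathsf{neg}}$ and $\Delta_{\mathsf{neg}}$). The only caveat, shared equally by the paper's own proof, is the implicit assumption that no clause contains complementary literals (a tautological clause such as $\ell\lor\neg x_i\lor x_i$ would make a clause subterm coincide with a formula of $\Delta_\phi$ and break the gadget), which is harmless since such clauses can be removed in polynomial time.
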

\begin{proof}
  The first part follows easily from Proposition~\ref{prop:scpl}: 
  $\nf(\Gamma)$ can be built from $\Gamma$ in polynomial time 
  and its size 
  at most as big as $\Gamma$.
  Deciding $\varphi\in L_{\pll}(\nf(\Gamma))$ can be done in polynomial time in the sum of the sizes of $\nf(\Gamma)$ and $\varphi$.
%

For the second part we will give a polynomial reduction one of the standard problems known to be {\bf NP}-complete to the problem of deciding $\not\der_{\pll}$.
 Given an instance of $3$-sat, $\mathcal{I}=\{x^i_1\ou x^i_2\ou x^i_3:1\leq i\leq k\}$ where $x^i_j$ are literals, that is $x^i_j=p^i_j$ or $x^i_j=\neg p^i_j$, 
 let:
\begin{align*}
 \Gamma_\mathcal{I}&=\{q_{x^i_1}\pl (q_{x^i_2}\pl q_{x^i_3}):1\leq i\leq k\}\\
 \Gamma_\mathsf{neg}&=\{q_{p}\pl q_{\neg p}:p\in \var(\mathcal{I})\}\\
 \Delta_\mathsf{neg}&=\{q_{\neg p}\pl q_{p}:p\in \var(\mathcal{I})\}
\end{align*}
 
 The set $\Gamma_\mathcal{I}\cup \Gamma_\mathsf{neg}\cup \Delta_\mathsf{neg}$
 can be produced in polynomial time from $\mathcal{I}$ and it is only polynomially larger than it.
Given a valuation $v$ over $\BMt$ such that $v(\Gamma_\mathcal{I})=\{1\}$ means that for each $i$ at least one of $q_{x^i_1}, q_{x^i_2},q_{x^i_3}$ must have value $1$.
 Futhermore, if $v(\Gamma_\mathsf{neg})=\{1\}$ and $v(\Delta_\mathsf{neg})=\{0\}$ means for every variable $p$ appearing in $\mathcal{I}$ either $q_{\neg p}$ or  $q_{p}$
  has value $1$ but not both.
Hence, $\mathcal{I}$ is satisfiable if and only if $\Gamma_\mathcal{I},\Gamma_\mathsf{neg}\not\der_{\pll}\Delta_\mathsf{neg}$.
%
 %
 %
\end{proof}

Is this possible   for some fragment of classical logic? The answer is \emph{no}.
Let  $\BMt$ be a  Boolean $\Sigma$-matrix where $\Sigma$ contain some arbitrary set of Boolean connectives.
From the complete characterization of the complexity of deciding the single-conclusion fragments of classical logic in \cite{BEYERSDORFF20091071}  we know
that $\vdash_\BMt$ is not {\bf coNP}-complete if and only if the connectives in $\Sigma$ are expressible using 
the constant functions $0$ and $1$, and only one of the following three binary connectives $\{\land,\lor,\oplus\}$. Deciding $\vdash_\BMt$ for all these non {\bf coNP}-complete cases is in {\bf P}.
Since $\vdash_\BMt$ is a fragment of $\der_\BMt$ and deciding the latter is always in {\bf coNP}, we have  
that  if  $\vdash_\BMt$ is  {\bf coNP}-complete then $\der_\BMt$ is also.
The complexity of $\der_{01\land}=\der_{\vdash_{01\land}}$ is in {\bf P} hence if  the connectives in $\Sigma$ are expressible using $0$, $1$ and $\land$ then $\der_{\BMt}$ is also 
in {\bf P}. 
%
Furthermore, we have that
$\Gamma\der_\BMt \{\delta_1,\ldots,\delta_k\}$ is equivalent to $\Gamma\vdash_\BMt \delta_1\lor \ldots \lor \delta_k$ wherever $\lor$ is expressible by the connectives in $\Sigma$,
and to $\Gamma, \neg\delta_2,\ldots,\neg\delta_k\vdash_\BMt \delta_1$ wherever $\neg$ is expressible by the connectives in $\Sigma$.
Easily we have that $\neg x=p\oplus 1$. 
Hence, 
in all
 remaining cases deciding $\der_\BMt$ is polynomially reducible to either $\vdash_{01\lor}$ or $\vdash_{01\oplus}$ and are therefore also in {\bf P}.

 In \cite{BEYERSDORFF20091071} some (sub-polynomial) nuances between deciding the single-premise fragments of $\vdash_\BMt$ are also studied. 
Most likely, such variations are also present in the divide $\vdash_\BMt$ and $\der_\BMt$ but we leave such analysis for another occasion.

\section{Conclusion} 
\label{sec:final}

We have shown how incorporating non-determinism in semantics and moving towards a symmetrical view on logic, by considering multiple-conclusion consequence relations,
significantly widens the range of available tools. 
In particular, it allows for a compositional, bottom up, approach to the analysis of logics.
Despite the recent advances there is still a long way to go to reach the depth and sophistication of what is known in the particular field of modal logics.
We look to it as source of case studies and inspiration on where to go. 
%
We intend to improve on \cite{wollic} and
provide a completely general and modular semantics for combined logics by taking as semantical units PNmatrices, covering, of course, what is known for fusion of modal logics.
 We expect to take profit from such knowledge, and develop techniques as in \cite{CaMaAXS},
  to provide insights on the semantics of strengthening of logics with sets of axioms depending on their 
  syntactical structure, akin to what is known regarding Sahlqvist formulas.

 The use of
 the extra expressivity of (P)Nmatrix semantics to capture relevant behaviours in many practical engineering or scientific
 contexts is still in its infancy.
 Nmatrices have been used already to give effective semantics to a big range of important non-classical logics 
 \cite{Baaz2013,av2,coniglioswap,ISMVL}.
 These structures can for example be used to deal with paraconsistent behaviour \cite{taming}, to model how a processor deals with information from multiple-sources \cite{Avron}, or for reasoning about computation errors \cite{DBLP:journals/sLogica/AvronK09}. The fact that $\pl$ can be seen as a defective $\land$ or $\lor$ might indicate that (P)Nmatrices can
 be of value when reasoning about unreliable logic circuits \cite{ASJRCSPM14,ASJRCSPM17}.
 Also, recently a natural interpretation of quantum states as valuations over Nmatrices was introduced \cite{quantum}.

 
  There is still another front opened by considering multi-functions. 
  Acknowledging the importance of the study of function clones for logic, suggests that the development of the theory of multi-function clones might be 
  an interesting path to follow.
The fact that there are various possible notions of composition, relating to the differences between tree-automata, term-automata and dag-automata 
\cite{tata2007,Siva-Dran-Rusi05b}, 
complicates the question.
However the connections with such well established and studied structures looks promising.

We finish with a table summarizing some of the results obtained in this paper, illustrating the differences between the logic of $\pl$ and 
the classical fragments with $\land$ and $\lor$. 
We can see that certain properties of $\land$ and $\lor$ are preserved (being inclusion logics for instance).
However, others are not. Finite-valuedness in terms of matrices and finite axiomatizability using single-conclusion rules are lost. In terms of complexity, deciding the Tarskian logic defined by $\pl$ is in the same complexity of each $\land$ and $\lor$ separately. However deciding its Scottian logic meets the maximum possible complexity of deciding a logic given by a finite Nmatrix, coinciding with the complexity of deciding any fragment of classical logic expressing both $\land$ and $\lor$.
Lastly, it seems interesting to explore the relation of $\pl$, and the process that gave rise to it,  with the notion of meet-combination of logics 
 introduced by 
 Sernadas et al. in \cite{10.1093/logcom/exr035}.
\begin{center}
 \begin{tabular}{c | c | 
  c | c | c | c }
 {\small Logic} &  {\small Fin. Ax.} 
&  {\small Fin. matrix} & 
  {\small Complexity} & {\small Left-inclusion} &  {\small Right-inclusion} 
\\
\hline
$\vdash_\land$& Y & Y & {\bf  P} & N& Y\\
$\vdash_\lor$& Y & Y & {\bf  P} & Y& N\\
$\vdash_{\pll}$& N & N & {\bf  P} & Y& Y\\
$\vdash_{\land\lor}$& Y & Y & {\bf  coNP} & N& N\\
$\der_\land$& Y & Y & {\bf  P} & N& Y\\
$\der_\lor$& Y & Y & {\bf  P} & N& Y\\
$\der_{\pll}$& Y & N & {\bf  coNP} & Y& Y\\
$\der_{\land\lor}$& Y & Y & {\bf  coNP} & N& N\\
\hline
\end{tabular}
\end{center}

\bibliographystyle{plain}  
\bibliography{biblio.bib}

{ \color{white} 
 { \section*{\tiny Hidden track}}
 ~\\[-1.3cm]
\begin{center}
 {\tiny
 {\bf Ornithorhynchus anatinus - platypus}\smallskip

Sleeping geology\\
On the isolated shore\\
For millions of years

Experimental continent\\
On purpose or accident?\\

Mysterious evolving\\
Problem solving\\

A vaudeville? A nation including one superior creation\\
A vertebra? Inverted...quite unheard of...\\

Orphan in a family\\
And a sole survivor\\
He's a living fossil

Reptillian? Mammalian\\
He's a bird-beaked, beaver-butt Australian\\

Amphibious? Paradox wearing plaid socks\\
Furry beetle? A bugbear, and a palezoologist's nightmare\\
Symmetrical physique of disbelief\\

The platypus has the brain of a dolphin\\
and can be seen driving a forklift in his habitat of kelp\\
He is the larva of the flatworm\\
and has the ability to regenerate after injury

No relation to the flounder.

Someone shipped him to the blokes\\
Who said he was a hoax\\
So they cut him to pieces, wrote a thesis\\

A cranium of deceit, he's prone to lie and cheat;\\
It's no wonder -- a blunder from down under

Duckbill, watermole, duckmole!

Barnacle

\hspace{4cm} {\bf Mr. Bungle}}
\end{center}
}

\end{document}